\newtheorem{thm}{Theorem}[section]
\newtheorem{cor}[thm]{Corollary}
\newtheorem{lem}[thm]{Lemma}
\theoremstyle{definition}
\theoremstyle{remark}
\numberwithin{equation}{section}
\begin{document}

\title[On Runge type theorems]{On Runge type theorems for solutions to 
strongly uniformly parabolic operators }

\author{A.A. Shlapunov}
\address[Alexander Shlapunov]
{Siberian Federal University
                                                 \\
         pr. Svobodnyi 79
                                                 \\
         660041 Krasnoyarsk
                                                 \\
         Russia}
\email{ashlapunov@sfu-kras.ru} 
\address{Sirius Mathematics Center
Sirius University of Science and Technology, Olimpiyskiy ave. b.1, 354349 Sochi,
         Russia}
\email{shlapunov.aa@talantiuspeh.ru}

\author{P.Yu. Vilkov}
\address[Pavel Vilkov]
{Siberian Federal University
                                                 \\
         pr. Svobodnyi 79
                                                 \\
         660041 Krasnoyarsk
                                                 \\
         Russia}
\email{pavel\_vilkov17@mail.ru}

\subjclass {Primary 35A35; Secondary  35N17, 35K25}
\keywords{approximation theorems, Fre\'chet topologies, strongly uniformly parabolic  
operators}

\begin{abstract}
Let $G_1, G_2 $ be  domains with rather regular boundaries in ${\mathbb R}^{n+1}$, 
$n \geq 2$, such that $G_1 \subset G_2$. We investigate the problem of 
approximation of solutions to strongly uniformly $2m$-parabolic  system $\mathcal L$  
in the domain $G_1$  by solutions to the same system in the  domain 
$G_2$. First, we prove that the space  $S _{\mathcal L}(G_2)$ of solutions to the system 
$\mathcal L$ in the domain $G_2$  is dense in the space $S _{\mathcal L}(G_1)$, endowed with  
the standard Fr\'echet topology of uniform convergence on compact subsets in $G_1$, if 
and only if  the sets $G_2 (t) \setminus G_1 (t)$ have no non-empty compact components 
in  $G_2 (t)$ for each $t\in \mathbb R$, where $G_j (t) = \{x \in {\mathbb R}^n: (x,t) \in 
G_j\}$. Next, under additional  assumptions on the regularity of the bounded domains $G_1$ and 
$G_1(t)$, we prove that solutions from the Lebesgue class $L^2(G_1)\cap S _{\mathcal L}(G_1)$ 
can be approximated by solutions from $S _{\mathcal L}(G_2)$ if and only if  the same 
assumption on the sets $G_2 (t) \setminus G_1 (t)$, $t\in \mathbb R$, is fulfilled.
\end{abstract}

\maketitle

\section*{Introduction}
\label{s.Int}

Apparently, the Approximation Theory in Analysis begins with the famous Weierstra\ss{} 
theorem for continuous functions on segments of the real line, see \cite{W1885}, 
where polynomials were used as the approximating set. It appears that the matter 
has become significantly more complicated for complex functions of complex variable if one 
wants to approximate  them by \textit{holomorphic} polynomials, see \cite{R1885}. 
Actually,  C. Runge \cite{R1885} proposed to use the Cauchy kernel for the so-called rational 
approximation of holomorphic functions on open connected sets (plane domains) 
in $\mathbb C$ in the topology related to uniform convergence on compact subsets of the 
domain. The notion of Runge's pair $\omega \subset \Omega$ of plane domains, for which the 
space ${\mathcal O}(\Omega) $ of holomorphic functions  in $\Omega$ is dense in the space 
$ {\mathcal O}(\omega) $ (endowed with the topology discussed above) gives rise to the 
investigation of topological/geometrical conditions on the pair. The analysis of continuous 
functions approximable on  compact sets  led to the notion of the analytic capacity and the 
creation of the related theory, see, for instance, the paper by A.G. Vitushkin \cite{V67}. 
The matters were successfully extended to the theory of harmonic functions, see 
\cite{Merg56}, and even to solutions to general elliptic operators with the 
uniqueness condition in small, see for instance, \cite{Brw62}, \cite{Lax},
\cite{Mal56}, \cite[ch. 4, 5]{Tark37}, where fundamental  
solutions to the related operators were used instead of the Cauchy kernel. 
 Of course, one should mention the theorem for (non-necessarily elliptic) operators with 
constant coefficients related to the uniform approximation by exponential-polynomial 
solutions in convex domains, see   
\cite{Mal63}, \cite[Ch. VI]{Pal}. By the way, F.E. Browder \cite{Brw62} extended Runge 
type results to some non-elliptic, e.g., hyperbolic, operators.

As for the parabolic equations, the problem of the uniform approximation in the context 
of Runge's pairs was successfully solved for the heat equation in the papers \cite{J}, 
\cite{D80}  (see also \cite{GauTa10} for "the rational approximation" by the functions 
constructed with the use of the heat kernel).

On the other hand,  approximation theorems in various function spaces, where behaviour of the 
elements are controlled up to the boundary of the considered sets, appeared to be more 
important for applications, see, for instance, \cite{Brw62} in the context 
of abstract locally convex topological vector spaces,  \cite{H68} for the analytic 
functions or the monograph \cite[Ch. 5--8]{Tark36} for the Sobolev solutions to systems of 
differential equations with surjective/injective symbols. 
But at the end of the XX-th century interest to this topic in the 
framework of general theory of differential  equations faded. In particular, for 
parabolic equations there are two principal reasons for this 
fact. The first one is the above mentioned theorem on the uniform approximation of solutions 
to operators with the constant coefficients in convex domains by the exponential-polynomial 
solutions covering many needs of approximation results in the Theory of Partial Differential 
Equations. The second reason is that typical problems for parabolic operators (such as the
Cauchy problem in ${\mathbb R}^n \times [0,T)$ or the problems in the cylinder domains 
related to the Dirichlet or Neumann data on the lateral side, combined with the Cauchy data 
on the base of the domain) were handled mostly with the Fourier method of 
separation of variables in the Bochner-Sobolev spaces,  where approximation with respect to 
the functions of the space variables is crucial, see, for example, \cite{Lion69}, 
\cite{Tema79}, or by the integral representation method in the H\"older spaces where the 
approximation is not usually used, see, for instance, \cite{frid}, \cite{LadSoUr67}.

However in recent years Runge's type approximation results were 
established for some classes of non-elliptic operators, including the parabolic ones.
For instance, we mention \cite{EG-FP-S19} on approximation in H\"older spaces of solutions 
on closed sets to 
second order scalar parabolic equations with H\"older coefficients,  
\cite{Klm21} on approximation in Fr\'echet spaces of $C^\infty$-smooth 
solutions for constant coefficients partial differential operators with a single 
characteristic direction, complemented with a theorem on  the so-called 
"quantitative approximation" \cite{DKlm}, and \cite{EP-S21} about approximation of 
solutions to Schr\"odinger equation in Bochner spaces. Also, for 
solutions from the Lebesgue spaces 
 to the heat equation or to the parabolic Lam\'e type operator on cylinder 
domains in ${\mathbb R}^{n+1} $, we refer to recent papers 
\cite{ShHeat} and \cite{VKShLame}, respectively. 

Significantly, many important applications of Runge type results 
for solitions to parabolic operators were discovered at the beginning of the XXI-st 
century, especially to Euler and Navier-Stokes equations of Hydrodynamics and to 
some inverse problems of Mathematical Physics, see 
\cite{EP-S15}, \cite{G-FRZ}, \cite{RueSa}.

Our primary interest is the non-standard Cauchy problem for solutions to the 
parabolic equations in cylinder domains with the Cauchy data on a part of the lateral side of 
the cylinder that behaves much more like the ill-posed Cauchy problem for solutions to 
elliptic systems, see, for instance, \cite{ShTaLMS}, \cite[Ch. 10]{Tark36} for the elliptic 
theory and \cite{KuSh}, \cite{MMT17}, \cite{VKShLame}, \cite{PuSh15}, for the parabolic 
theory in the Sobolev type spaces. The approximation theorems for Runge's pairs in the 
Lebesgue type spaces are crucial for this type of problems because they provide both dense 
solvability  and a  possibility to construct the approximate solutions to the problems, see 
\cite{Merg56}, \cite{MaHa74}, \cite[Theorem 7.6]{ShTaLMS}, and \cite[Corollary 3.5]{VKShLame}, 
respectively. Actually, the non-standard ill-posed Cauchy problem for the parabolic operators 
plays essential role in the development of non-invasive methods of Cardiology, see, for 
instance, \cite[\S 4]{KSU_ZAMM}. Please, note that formulations of such problems do not 
usually use initial data at a suitable time $t=t_0$ and hence we do not need to pose them in 
the cylinder domains.  

Thus, in the present paper we consider approximation theorems for solutions 
to the strongly uniformly parabolic matrix differential operator ${\mathcal L} = \partial _t 
- L$ on a strip ${\mathbb R}^{n} \times \mathcal{I} $, where $\mathcal{I} $ 
is an (open) interval on the time axis, $(-L)$ is a 
strongly elliptic operator with bounded regular coefficients on the strip. We additionally 
assume that both ${\mathcal L}$ and its formal adjoint operator  ${\mathcal L}^*$  admit  
bilateral fundamental solutions and possess the Unique Continuation 
Property with respect to the space variables. The normality property for the fundamental solution 
plays an essential role in the considerations. As far as the topic targets the problems 
without initial data, the presented approach is fit for both the parabolic and backwards 
parabolic operators.  

Section \S \ref{s.1} is devoted to the uniform approximation on compact subsets of the domain 
$G_1 \subset {\mathbb R}^{n+1}$ of the elements of the space $S_{\mathcal L} (G_1)$  
consisting of continuous solutions to the equation ${\mathcal L} u = 0$ in $G_1$ by the 
elements of the space $S_{\mathcal L} (G_2)$ where the domain $G_2$ contains $G_1$. 
We prove that, under reasonable assumptions 
on the regularity of the coefficients of the operator $L$, 
 the space $S_{\mathcal L} (G_2)$ is dense in $S_{\mathcal L} 
(G_1)$ if and only if for each $t \in {\mathcal I}$ the complement of the set $G_1 (t) = \{x \in 
{\mathbb R}^n: (x,t) \in G_1\}$  in the set $G_2 (t)$ has no compact (non-empty) components;  
this is quite similar to the case of the heat equation. 
Actually, taking in accounts a small gap in the proof of the approximation theorem for the  
heat equation in \cite{D80} (that was discovered by \cite{GauTa10}) concerned with rather 
general assumptions on the structure of the domains considered as Runge's pairs in this 
particular situation,  we additionally assume some regularity of the domains' boundaries, see 
assumption $\mathrm{(A)}$, in section \S \ref{s.1}.

In the section \S \ref{s.2} we consider a more subtle problem of approximation 
of solutions to parabolic operator ${\mathcal L}$ from the Lebesgue class 
 $L^2(G_1)$  by more regular solutions in a bigger domain $G_2$. 
We present a solution to this problem in the case where $G_1$ is a bounded 
domain with piece-wise smooth boundary with additional geometric restrictions (see $\mathrm{(A1)}$, 
$\mathrm{(A2)}$). Finally, as a by-product, we obtain the theorem on existence of bases with double 
orthogonality property in Sobolev type spaces of solutions to parabolic systems.  

\section{The uniform approximation}
\label{s.1}

Let ${\mathbb R}^n$, $n \geq 1$, be the $n$-dimensional Euclidean space with the coordinates  
$x=(x_1, \dots , x_n)$ and let  $\Omega \subset {\mathbb R}^n$ be a bounded domain 
(open connected set). As usual, denote by  $\overline{\Omega}$ the closure of $\Omega$, and by
 $\partial\Omega$ its boundary.  
 
We consider functions over  ${\mathbb R}^n$ and 
${\mathbb R}^{n+1}$.  As usual, for $s \in {\mathbb Z}_+$ we denote by $C^s(\Omega)$ 
and $C^s(\overline \Omega)$  the spaces of all $s$ times continuously differentiable 
functions on $\Omega$ and $\overline \Omega$, respectively.  Next,  for  $p\in [0,1)$, 
we denote by $C^{s,p}(\overline \Omega)$ the standard H\"older spaces. 
The spaces $C^{s,p}(\overline \Omega)$ are known to be Banach spaces with the standard  
norms and the  $C^{s,p}(\Omega)$ are the Fr\'echet spaces with the standard semi-norms, see, 
for instance, 
\cite{LadSoUr67}.

Let  also $L^2 (\Omega)$ be the Lebesgue space over $\Omega$ with the standard inner product 
$(u,v)_{L^2 (\Omega)} $ and let $H^s (\Omega)$, $s\in \mathbb N$, be the Sobolev space  
with the standard inner product $(u,v)_{H^s (\Omega)}$. As usual, we 
consider the Sobolev space $H^{-s} (\Omega)$, $s\in \mathbb N$, as the dual space 
of $H^{s}_0 (\Omega)$ where $H^{s}_0 (\Omega)$ is the closure of the space 
$C^\infty_{\rm comp} (\Omega)$ consisting of smooth 
functions with compact supports in $\Omega$.  

For a natural number $k$, it is convenient to denote by ${\mathbf C} ^{s,p} _k (\Omega)$ 
the space of $k$-vector functions with components of the class $C ^{s,p} (\Omega)$ and, 
similarly, for the space $\mathbf{L}^2_k (\Omega)$, etc. 

Let $\mathcal{I}$ be a finite or infinite (open) interval on the time axis and let  $L$ be a 
$(k\times k)$-matrix differential operator 
with continuous coefficients in the strip ${\mathbb R}^{n} \times \mathcal{I}$ of an even order 
$2m$:
$$
L = \sum_{|\alpha|\leq 2m} L_{\alpha} (x,t) \partial ^\alpha_x 
$$ 
where $L_{\alpha} (x,t)$ are $(k\times k)$-matrices with entries as described above and 
such that $L^*_\alpha (x,t)= L_\alpha (x,t)$ for all multi-indexes $\alpha \in {\mathbb N}^n$ 
with $|\alpha|=2m$, all $t\in \mathcal{I}$   and all $x \in {\mathbb R}^n$. Consider 
the  strongly 
uniformly (Petrovsky) $2m$-parabolic operator  
$$
{\mathcal L}= \partial _t - L,
$$  
see, for instance, \cite{eid}, \cite{sol}. More precisely, this 
additionally means that the operator $(-L)$ is strongly elliptic, i.e. 
there is a positive constant $c_0$ such that 
$$
(-1)^{m+1} w^* \Big(
\sum_{|\alpha| =2m}L_\alpha (x,t)\zeta ^\alpha \Big) w \geq c_0 |w|^2 |\zeta|^{2mk}
$$
for all $t\in \mathcal{I}$, all $x \in {\mathbb R}^n$, all $\zeta \in {\mathbb R}^n \setminus 
\{0\}$ and all $w \in {\mathbb C}^k \setminus \{0\}$; here $w^*$ is the transposed and complex 
adjoint vector for the complex vector $w \in {\mathbb C}^k$. In particular, for each fixed 
$t_0\in \mathcal{I}$ the operator $L$ is (Petrovsky) elliptic with respect to the space 
variables $x$, i.e. 
$$
\det \Big(\sum_{|\alpha| =2m}L_\alpha (x,t_0)\zeta ^\alpha \Big) \ne 0
$$
for all $x \in {\mathbb R}^n$ and all $\zeta \in {\mathbb R}^n \setminus \{0\}$. 

As usual, we denote by ${\mathcal L}^*$ the formal adjoint operator for ${\mathcal L}$:
$$
{\mathcal L}^*= - \partial _t -  \sum_{|\alpha|\leq 2m} (-1)^{|\alpha|} \partial ^\alpha_x ( 
L^*_{\alpha} (x,t) \, \cdot). 
$$

As we always may proceed with a decomplexification, doubling the dimensions of vectors and matrices, 
but preserving the ellipticity properties of the operator $L$, we assume all the vector functions and coefficients 
of differential operators under the consideration are real-valued.

Following \cite[pp. 245-246]{eid}, we assume also that the operator $\mathcal L$ satisfies 
the following assumptions:
\begin{enumerate}
\item[($\alpha_1$)] 
the coefficients $L_\alpha(x,t)$ are uniformly continuous on the strip 
${\mathbb R}^{n} \times \mathcal{I}$ for $|\alpha|=2m$; 
\item[($\alpha_2$)] 
the coefficients $L_\alpha(x,t)$ are uniformly bounded on the strip 
${\mathbb R}^{n} \times \mathcal{I}$; 
\item[($\alpha_3$)] 
the coefficients $L_\alpha(x,t)$ satisfy a H\"older condition  
with respect to the space 
variables $x$ uniformly on the strip ${\mathbb R}^{n} \times \mathcal{I}$; 
\item[($\alpha_4$)]
each coefficient $L_\alpha(x,t)$ has partial derivatives $\partial^\beta _x L_\alpha(x,t)$ for all 
multi-indeces $\beta$ with $\beta\leq \alpha$  on the strip ${\mathbb R}^{n} \times \mathcal{I}$ satisfying 
assumptions $(\alpha_1)$, $(\alpha_2)$, $(\alpha_3)$.
\end{enumerate}
Under these assumptions on  the coefficients of the operator it admits a unique fundamental 
solution  $\Phi (x,y,t,\tau)$ possessing standard estimates \cite[formulas (2.16), 
(2.17)]{eid}) and the normality property (\cite[Property 2.2]{eid}), i.e. 
\begin{equation} \label{eq.right}
{\mathcal L}_{x,t} \Phi(x,y,t,\tau) = I_k \, \delta (x-y, t-\tau),   
\end{equation}
i.e. the right hand side equals to the unit matrix $I_k$ multiplied by the Dirac 
distribution at the point $(x,t)$ which is commonly written as $\delta (x-y, t-\tau)$, 
where $\delta $ denotes the Dirac distribution at the origin,  and   
\begin{equation} \label{eq.left}
{\mathcal L}^*_{y,\tau} \Phi^*(x,y,t,\tau)  =I_k \delta (x-y, t-\tau), 
\end{equation}
where $\Phi^* = (\Phi_{ji})$ is the adjoint matrix for $\Phi =(\Phi_{ij})$. Note that 
the normality property still holds for complex-valued operators with coefficients satisfying 
assumptions ($\alpha_1$)--($\alpha_4$), but in this case 
$\Phi^* = (\overline \Phi_{ji})$ is the Hermitian adjoint. 

We are going to investigate solutions to $2m$-parabolic equations in  non-cylinder domains 
of special type, see for instance \cite[\S 22]{sol}. Namely let  $G$ be a  
domain in the strip ${\mathbb R}^{n} \times  \mathcal{I}$ and let 
$$
T_1(G)=T_1=\inf_{(x,t)\in G} t, \, T_2(G) = T_2=\sup_{(x,t)\in G} t. 
$$

Consider sets $G (t) = \{x \in {\mathbb R}^n: (x,t) \in G\}$, $t\in 
\mathcal{I}$, playing an essential role in the sequel. We assume that the boundary 
of $G$ satisfies the following property.

\begin{itemize}
\item[(A)] 
The set $G(t)$ is a Lipshitz domain in ${\mathbb R}^n$ for each $t \in (T_1,T_2)$ and
for any numbers $t_3, t_4$ such that $T_1<t_3<t_4<T_2$ 
the set $\Gamma _{t_3,t_4}=\cup_{t\in [t_3,t_4]} \partial G (t)$ is a Lipschitz surface 
in ${\mathbb R}^{n+1}$. 
\end{itemize}

For us, the primary interest for studying parabolic equations in such domains 
is a possibility of applications in the Cardiology, where $G(t) \subset 
{\mathbb R}^3 \times (T_1,T_2)$ is the shape of the human 
myocardium at the time $t$ (see, for instance, \cite{Beheshti2016} for the related 
mathematical models or \cite{KSU_ZAMM} for the particular bi-domain model). 

With this purpose  we need the standard Banach anisotropic spaces $C^{2sm,s} (\overline G)$, see for instance, \cite[\S 22]{sol}, with the norm: 
$$
\|u\|_{C^{2ms,s} (\overline G)} = \sum_{|\alpha|+2mj\leq 2ms} 
\|\partial^\alpha_x \partial^j_t  u\|_{C (\overline G) }.
$$ 
The corresponding anisotropic spaces $C^{2sm,s} (G)$ are Fr\'echet spaces with the topology 
of uniform convergence on compact subsets of $G$ with all the derivatives $\partial_x
^\alpha \partial _t ^j$, $|\alpha|+ 2mj\leq 2ms$, see, for instance, \cite{Sch71}.

Now, $S _{\mathcal L}(G)$ be the set of all the generalized 
$k$-vector functions on $G$, satisfying the (homogeneous)  equation
\begin{equation} \label{eq.heat}
{\mathcal L} u = 0 \mbox{ in } G 
\end{equation}
in the sense of distributions. We endow this space  with the standard  topology of the 
uniform convergence on compact subsets of $G$. Next, we note that estimates  
\cite[formulas (2.16), (2.17)]{eid}) for the fundamental solution imply the standard interior 
a priori estimates for solutions to \eqref{eq.heat}, see, for instance, 
\cite[\S 19]{sol}, or \cite[Ch. 4, \S 2]{frid} for the second order operators. This means 
that all the distributional solutions to equation \eqref{eq.heat} are 
$(2m,1)$-differentiable on 
their domain, i.e. the following continuous embedding holds true:
$$
S _{\mathcal L}(G) \subset \mathbf{C}^{2m,1} _k (G).
$$ 
In particular, this means that $S _{\mathcal L}(G)$ is a closed subspace 
in $\mathbf{C} _k (G)$ and it is a Fr\'echet space itself (with the 
standard Fr\'echet topology inducing the standard uniform convergence together with all the 
derivatives on compact subsets of $G$). 

We also need more assumptions on the operator $\mathcal L$: the Unique Continuation 
Property with respect to the space variables  for  ${\mathcal L}$ and 
${\mathcal L}^*$. Namely, we recall that the Unique Continuation 
Property with respect to the space variables for a differential operator $\tilde L$ on 
a domain $G \subset {\mathbb R}^{n+1}$ consists in the following: 
\begin{enumerate}
\item[(UCP)] for any solution $u \in S _{\tilde {\mathcal L}}(G)$ and any 
$t_0 \in (T_1,T_2)$, if $u (x,t_0)= 0$  for all $x$ from an open 
 subset $\omega \subset G (t_0)$ then $u \equiv 0$ in the open connected component 
of $G (t_0)$, containing $\omega$.
\end{enumerate}

Of course, if ${\mathcal L}$ is the operator with constant coefficients then all the 
assumptions $(\alpha_1)$,   $(\alpha_2)$,  $(\alpha_3)$,  $(\alpha_4)$ and $(UCP)$ 
for both ${\mathcal L}$ and ${\mathcal L}^*$ are 
fulfilled.  Besides, they hold true if the coefficients of the operator $\mathcal L$ are smooth, 
bounded and real analytic with respect to the variables $x$ for each $t\in \mathcal{I}$. In 
this case, the elements of $S_{\mathcal L}(G) $ are actually smooth in $G$ and they are real 
analytic with respect to the space variable $x\in G(t)$ for all $t \in (T_1,T_2)$, see, for 
instance \cite{eid}. Moreover, the following continuous embedding 
holds true in this particular situation:
$$
S _{\mathcal L}(G) \subset \mathbf{C}^{\infty} _k (G).
$$
  
Next, following C. Runge \cite{R1885} and \cite{D80}, we call domains 
$G_1 \subset G_2 \subset {\mathbb R}^{n+1}$ ${\mathcal L}$-Runge's pair 
 if $S_{\mathcal L}(G_2)$ is everywhere dense in $S_{\mathcal L}(G_1)$. 
The following  theorems on Runge's type approximations are quite similar to the corresponding statement 
for the heat equation, see \cite{J} for the case $G_2 = {\mathbb R}^{n+1}$ 
or \cite{D80} for the case of general domains. 
The typical assumption 
in this type of theorems is the following:
\begin{enumerate} 
\item[(B)]
for each $t \in {\mathcal I}$ 
the set  $G_2(t)\setminus G_1 (t) $ has no compact (non-empty) components in the set $G_2 (t)$. 
\end{enumerate} 

\begin{thm} \label{t.dense.uniform.suff}
Let ${\mathcal L}$ satisfy assumptions $(\alpha_1)$, $(\alpha_2)$, $(\alpha_3)$, 
$(\alpha_4)$ and $G_1 \subset G_2 $   be domains 
in the strip ${\mathbb R}^n \times {\mathcal I}$ such that $G_2 \ne {\mathbb R}^n \times 
{\mathcal I}$. 
If   $\mathrm{(UCP)}$ is fulfilled 
for  ${\mathcal L}^*$,  and 
the pair $G_1$, $G_2$ satisfies assumptions  $\mathrm{(A)}$ and $\mathrm{(B)}$,
then $S_{\mathcal L}(G_2)$ is everywhere dense in $S_{\mathcal L}(G_1)$. 
\end{thm}

\begin{thm} \label{t.dense.uniform.neces}
Let ${\mathcal L}$ satisfy assumptions $(\alpha_1)$, $(\alpha_2)$, $(\alpha_3)$, 
$(\alpha_4)$ and $G_1 \subset G_2 $   be domains 
in the strip ${\mathbb R}^n \times {\mathcal I}$ satisfying assumption $\mathrm{(A)}$.  If 
$\mathrm{(UCP)}$ is fulfilled for  ${\mathcal L}$ and for the 
coefficients $L_\alpha $, $0<|\alpha|\leq 2m$, the partial derivatives $\partial _t L_{\alpha}$ 
are continuous on ${\mathbb R}^n \times {\mathcal I}$, then assumption $\mathrm{(B)}$ is neccessary for 
$S_{\mathcal L}(G_2)$ to be everywhere dense in $S_{\mathcal L}(G_1)$.
\end{thm}

Let us proceed with the related proofs.

\begin{proof}[Proof of Theorem \ref{t.dense.uniform.suff}] 
The proof follows the scheme of typical Runge's type theorems related to elliptic operators, cf. \cite{Mal56}, \cite{Brw62}, and their generalizations to parabolic 
ones. To be more precise, we slightly modify  the proof from \cite{D80} for the solutions  
to the heat equation, using the duality theorems from modern functional analysis, 
see, for instance, \cite{Edw65} and the parametrix method for differential equations, 
see, for example, \cite{eid}, \cite{frid}, \cite{Tark35}. 

As we have noted above, the space $S_{\mathcal L}(G_1)$ is a closed subspace of the space 
${\mathbf C}_k (G_1)$, endowed with the standard topology of the uniform convergence 
on compact subsets of  $G_1$. Then the Hahn-Banach Theorem implies that $G_1$, $G_2$ is a 
${\mathcal L}$-Runge's pair if and only if any continuous functional $F$ on ${\mathbf C}_k 
(G_1)$ annihilating the space $S_{\mathcal L}(G_2)$ also annihilates the space 
$S_{\mathcal L}(G_1)$.

On the other hand, according the Riesz Theorem, see, for instance, 
\cite[Theorem 4.10.1]{Edw65}, any element $F$ of the dual space ${\mathbf 
C}_k^* (G_1)$ for ${\mathbf C}_k (G_1)$
can be presented with the use of a ($k$-vector valued) Radon measure $\mu_F$ with 
compact support $K(\mu_F)$ in $G_1$ , i.e.
\begin{equation} \label{eq.Riesz}
F(u) = 
\langle u ,d\mu_F  
\rangle \mbox{ for all } u \in S_{\mathcal L}(G_1).
\end{equation}
Let $W$ be the vector function, with components obtained by applying the functional $F$ to the 
corresponding columns of the 
matrix $(x,t) \to \Phi (x,t,y,\tau)$.  For obvious reasons we write 
\begin{equation*}
W(y,\tau)= 
\langle \Phi^* (x,y, t,\tau) , d\mu _F (x,t) \rangle ;
\end{equation*}
of course, it is well-defined outside the support of $d\mu _F $. 
By  \eqref{eq.right}, for any vector $\varphi \in C^\infty_{k,comp} ({\mathbb R}^{n} \times \mathcal{I})$
we have  
$$
{\mathcal L}_{x,t} \, \langle \Phi^* (x,y, t,\tau) , \varphi (y,\tau) \rangle = \varphi (x,t),
$$
and then, the interior a priori estimates for parabolic systems, see \cite[\S 19]{sol}, 
imply that the vector function 
$V_\varphi(x,t)  = \langle \Phi^* (x,y, t,\tau) , \varphi (y,\tau) \rangle $ belongs to 
$\mathbf{C}^{2m,1}_k ({\mathbb R}^{n} \times \mathcal{I})$. In particular, the vector 
function $W$ can be extended as a distribution to ${\mathbb R}^{n} \times \mathcal{I}$ via 
$$
\langle W, \varphi \rangle = \langle V_\varphi, d\mu_F \rangle, 
$$
because $d\mu_F $ is a ($k$-vector valued) Radon measure with compact support. 

Besides, according to \eqref{eq.right}, columns of the matrix $\Phi (x,t,y,\tau)$ belong to 
$S_{\mathcal L} (G_2)$ with respect to variables $(x,t)$ for each fixed $(y,\tau) \not \in G_2$.
Hence, 
if $F\in {\mathbf C}_k^* (G_1)$ annihilates the space $S_{\mathcal L}(G_2)$ then we have  
\begin{equation}
\label{eq.v1.A}
W(y,\tau)=  0 \mbox{ for all } 
(y,\tau) \not \in G_2.
\end{equation}
But \eqref{eq.left} implies that 
\begin{equation} \label{eq.L*W.1}
{\mathcal L}^* W = d\mu_F \mbox{ in } {\mathbb R}^{n} \times \mathcal{I},  
\end{equation}
in the sense of distributions and, in particular, 
\begin{equation} \label{eq.L*W.2}
{\mathcal L}^* W = 0 \mbox{ in } 
\Big({\mathbb R}^{n} \times \mathcal{I} \Big) \setminus K(\mu_F).
\end{equation}
Note that the operator ${\mathcal L}^*$ is backwards-parabolic and, for any 
solution $v(y,\tau)$ to the equation ${\mathcal L}^* v =0$, the vector 
$w(y,\tau)=v(y,-\tau)$  is a solution to the strongly parabolic system of equations  
$(\partial_\tau - L^* _y) w =0$. 
Therefore $W(y,\tau) \in \mathbf{C}^{2m,1}_k
 \big(\big({\mathbb R}^{n} \times \mathcal{I} \big) 
\setminus K(\mu_F)\big)
$ and, 
in particular, it is $C^{2m}$-smooth with respect to  $y$ in ${\mathbb R}^n 
\setminus K(\mu_F) (\tau)$ for each $\tau \in \mathcal{I}$ where 
$K(\mu_F) (\tau) = \{x\in {\mathbb R}^n: (x,\tau) \in K(\mu_F)\}$. 

As both $G_1\subset G_2$ satisfy assumption $\mathrm{(A)}$ and $G_2 \ne {\mathbb R}^n \times {\mathcal I}$, the componets of sets 
${\mathbb R}^n\setminus   G_2 (t) \subset {\mathbb R}^n\setminus G_1 (t)$ are either empty sets 
or closures of Lipschitz domains. 
Since the  set $G_2 (t) \setminus G_1 (t)$ has no compact components in $G_2 (t)$, we see that 
 each bounded component of  ${\mathbb R}^{n} \setminus \overline {G_1 (t)} $ intersects with
 ${\mathbb R}^{n} \setminus  \overline {G_2 (t)} $ by a non-empty open set for each $t\in (T_1,T_2)$. 
Hence, by $\mathrm{(UCP)}$ for ${\mathcal L}^*$, the vector $W$ vanishes on every bounded component 
of ${\mathbb R}^{n} \setminus \overline {G_1 (t)} $ for each $t\in (T_1,T_2) $. 
Next, let $\hat G _j (t) $ be the union of $G _j (t)$ with all the 
components of the set $G _j (t)$ that are relatively compact in 
${\mathbb R}^n$. By the discussion above, the closure of $\hat G _1 (t)$ lies in the closure 
of $\hat G _2 (t) $. Then, by De Morgan’s Law in the Sets' Theory we have 
\begin{equation} \label{eq.DeM}
\Big( {\mathbb R}^n \setminus \overline{\hat G _1 (t) }\Big)\cap 
\Big( {\mathbb R}^n \setminus \overline{\hat G _2 (t) }\Big) =   
{\mathbb R}^n \setminus \Big( \overline{\hat G _2 (t) } \cup \overline{\hat G _1 (t) }  \Big)= 
{\mathbb R}^n \setminus  \overline{\hat G _2 (t) }.
\end{equation}
In particular, this means that the vector $W$ vanishes on unbounded components of the set 
${\mathbb R}^n \setminus \overline {G _1 (t)} $ for each $t\in (T_1 (G_1), T_2 (G_1))$, too. 
Thus, \eqref{eq.v1.A} and the Unique Continuation Property $\mathrm{(UCP)}$ 
for the operator ${\mathcal L}^*$ imply that 
\begin{equation*} 
W (y,\tau) = 0   \mbox{ in }{\mathbb R}^{n} \setminus \overline{G_1 (\tau)}
\mbox{ for all } \tau\in \mathcal{I}, 
\end{equation*}
i.e. the vector $W$  is supported in $\overline G_1$. 

As Diaz \cite[p. 644]{D80} noted, complications may arise if  $W$  is not compactly supported 
in $G_1$  even in the case where ${\mathcal L}$ is the heat operator. Nevertheless  he 
considered more general type of domains and for this reason the related proof was rather 
complicated. 

Clearly, for $(x,t) \in \partial G_1$ with $t\in T_1 (G_1), T_2 (G_1))$ it holds 
$x\in \partial G_1(t)$. By hypothesis $\mathrm{(A)}$ and $\mathrm{dist} (\partial G_1, K(\mu (F))>0$ 
it follows immediately that $W$ vanishes in an open subset of the connected component 
of $({\mathbb R}^n \times {\mathcal I})\setminus K(\mu (F)$ in ${\mathbb R}^n \times {\mathcal I}$ which 
contains $(x,t)$. Therefore, by $\mathrm{(UCP)}$ for ${\mathcal L}^* $ and \eqref{eq.L*W.2} it follows that 
$W$ vanishes in a neghbourhood of the point $(x,t)$.  Moreover for 
$(x,t) \in \partial G_1$ with $t\in \{ T_1 (G_1), T_2 (G_1)\}$ it is even simpler to conclude that 
$W$ vanishes in a neghbourhood of the point $(x,t)$. Hence the support $\mathrm{supp} \, (W)$ of $W$ is contained 
in $G_1$. Following \cite{J}, we may even say 
that $W$ is supported in 
$$
\hat K (\mu_F) =  \cup_{\tau \in (T_1,T_2) } \hat K_{G_1}(\mu_F) (\tau), 
$$  
where $\hat K_{G_1}(\mu_F) (\tau)$ is   
the union of $K(\mu_F) (\tau)$ with all the 
components of the set $G_1 (\tau)\setminus K(\mu_F) (\tau)$ that are relatively compact in 
$G_1 (\tau)$. Then, to prove that $\mathrm{supp} \, (W)\textsc{}$ is compact, we may 
almost literally repeat the arguments from the proof of the crucail lemma in \cite[\S 2]{J}. Indeed, 
each section $K_{G_1}(\mu_F) (\tau) = \emptyset$  for all sufficiently large $|\tau|$. Therefore, 
$\hat K (\mu_F)$ is bounded. To prove that $\hat K (\mu_F)$ is closed, suppose that 
$(x,\tau) \not \in \hat K (\mu_F)$. This means $x \not \in \hat K (\mu_F) (\tau)$, and
this implies that there exists a continuous curve $\gamma$ in ${\mathbb R}^n$ which starts at $x$ 
and tends to $\infty$ lying in the open set ${\mathbb R}^n \setminus \hat K (\mu_F) (\tau)$. 
Choose a closed ball $B \subset {\mathbb R}^n \setminus \hat K (\mu_F) (\tau)$ centered at $x$. 
Then there exists $\varepsilon  > 0$ such that for $|\tau - \tau'| < $ 
the set $K_{G_1}(\mu_F) (\tau')$ is disjoint from $B$ and the image of $\gamma$. But then for $x' \in  B$ the
point $x' \not \in  \hat K (\mu_F) (\tau')$. Thus, $B (x) \times (\tau- \varepsilon, 
\tau + \varepsilon)$ is disjoint from $\hat K (\mu_F)$. This proves ${\mathbf R}^{n+1}
\setminus \hat K (\mu_F)$ is open and then $\hat K (\mu_F)$ is closed, i.e. $\hat K (\mu_F)$ is a 
compact.

Thus, as $W$ is compactly supported in $G_1$  
then we may take  a function $\varphi \in C^\infty (G_2) $ compactly supported in 
$G_1$ such that $\varphi \equiv 1 $ on a neighbourhood of ${\rm supp} \, (W)$. 
Then, by the Leibniz rule, for each $u \in S_{\mathcal L}(G_1)$ we have 
$$
{\mathcal L} (\varphi u) = \varphi {\mathcal L}  u   + 
(\partial_t \varphi) u  + \sum_{ |\alpha|\leq 2m} L_\alpha (x) 
\sum_{\beta + \gamma =\alpha , |\gamma|\geq 1} C_{\beta,\gamma}
(\partial ^{\gamma} \varphi) (\partial ^{\beta} u) = 
$$
$$
(\partial_t \varphi) u  + \sum_{ |\alpha|\leq 2m} L_\alpha (x) 
\sum_{\beta + \gamma =\alpha , |\gamma|\geq 1} C_{\beta,\gamma}
(\partial ^{\gamma} \varphi) (\partial ^{\beta} u), 
$$
where $C_{\beta,\gamma}$ are binomial type coefficients; in particular,  
$$
{\rm supp} \, ({\mathcal L} (\varphi u)) 
\cap {\rm supp} \, (W)  =\emptyset.
$$ 
Therefore \eqref{eq.Riesz} and \eqref{eq.L*W.1} yield
\begin{equation} \label{eq.supported}
F (u) = \langle u ,d\mu_F  \rangle = \langle  u , {\mathcal L}^* W \rangle =
\langle  (\varphi u) , {\mathcal L}^* W \rangle = \langle  {\mathcal L} (\varphi u) ,  W 
\rangle = 0,
\end{equation} 
i.e. $F$ annihilates $S_{\mathcal L}(G_1)$, too. 
\end{proof}

Now we may proceed with the proof of the second theorem.

\begin{proof}[Proof of Theorem \ref{t.dense.uniform.neces}] 
Let $K(t) \Subset G_2 (t)$ stand for compact (possibly, empty) component of $G_2(t) \setminus G_1 (t)$ in $G_2(t)$. 
Taking into the account assumption $\mathrm{(A)}$,  we see that $K(t) \cup G_1 (t)$ is an open subset in $G_2(t)$ 
and $K(t) \Subset K(t) \cup G_1 (t)$ for $t \in (T_1 (G_1), T_2 (G_1))$. 
Let there be a number $t_0\in \mathbb R$ such that the set 
$G_2 (t_0) \setminus G_1 (t_0) $  have a compact non-empty component $K (t_0)$ in the set 
$G_2 (t_0)$. Let us prove that there is a vector $u\in S_{\mathcal L} (G_1)$ that can not be 
approximated by elements of $S_{\mathcal L} (G_2)$. 

With this purpose, fix the oprentation of ${\mathbb R}^{n+1}$ by choosing orders of coordinate 
axes $Ot, Ox_1, \dots Ox_n$.  According to \cite[\S 2.4.2]{Tark35} the operator $L$ 
admits a Green bi-differential 
operator ${\mathcal G}_{ L}$ of order $(2m-1)$ with respect to the space variables $x$, acting from 
$ \mathbf{C}^{2m,1}_k (G_2) \times \mathbf{C}^{2m,1}_k (G_2) $
to the space of $(n+1)$-differential forms with  coefficients from $C^1 (G_2)$, i.e. 
\begin{equation}\label{eq.Green.L}
\int_{\partial G_3 }
 {\mathcal G}_{ L} (g,v) = (Lv,g)_{\mathbf{L}^2 _k(G_3)} -
( v,{L} ^* g)_{\mathbf{L}^2 _k(G_3)} \mbox{ for all } g,v\in  \mathbf{C}^{2m,1}_k (\overline G_3) 
\end{equation}
 and any domain $G_3\Subset G_2$ with piecewise smooth boundary. 
The exposition in \cite[\S 2.4.2]{Tark35} was done in the category of operators with $C^\infty$-smooth 
coefficients, but the construction of ${\mathcal G}_{L}$ is transparent 
and \cite[formula (2.4.12)]{Tark35} provides that 
only partial derivatives $\partial _t L_{\alpha}$ or
$\partial _x^\beta L_{\alpha}$  
with $|\beta| \leq |\alpha|$ of the coefficients $L_\alpha$, $|\alpha|>0$, may be used calculating \eqref{eq.Green.L} 
 (recall that the regularity of 
$\partial _x^\beta L_{\alpha}$ is granted by assumption $(\alpha4)$). Then the Green operator 
for ${\mathcal L}$ is given 
as follows: 
$$
 {\mathcal G}_{ \mathcal L} (g,v) = g^* v dx -  {\mathcal G}_{ L} (g,v),
$$
and, similarly to \eqref{eq.Green.L}, we have  
\begin{equation}\label{eq.Green.parab}
\int_{\partial G_3 }
 {\mathcal G}_{ \mathcal L} (g,v) = ({\mathcal L}v,g)_{\mathbf{L}^2 _k(G_3)} - 
( v,{\mathcal L} ^* g)_{\mathbf{L}^2 _k(G_3)} \mbox{ for all } g,v\in  \mathbf{C}^{2m,1}_k (\overline G_3). 
\end{equation}
Now, fix a point $y_0 \in K_0$. 
Then any vector column $U_j (x,t)$, $1\leq j \leq k$, 
of the fundamental matrix $\Phi  (x,y_0, t,t_0)$ 
belongs to the space $S_{\mathcal L} (G_1)$. 
 
First, we assume that 
\begin{equation} \label{eq.Kt}
\cup_{t\in (T_1 (G_1), T_2 (G_1))} K(t) \Subset  \cup_{t\in (T_1 (G_1), T_2 (G_1))} 
\big( G_1(t) \cup K(t)\big).
\end{equation} 
In this case we may choose a 
bounded domain  $G_3$ with 
a piecewise smooth boundary $\partial G_3$ such that $(y_0,t_0) \in G_3 \Subset G_2$ and 
$\partial G_3 \Subset G_1$. 
If the vector function 
$U_j (x,t)$ can be approximated in ${\mathbf C}^{2m,1}_k  (G_1)$ 
by a sequence $\{ u^{(i)}_j \}_{i\in \mathbb N}$ from the 
space $S_{\mathcal L} (G_2)$ 
then  the sequences of the partial derivatives  
$\{ \partial ^\alpha_x \partial_t^j u^{(i)}_j \}$, $|\alpha|+2mj\leq 2m$, 
converge uniformly on $\partial G_3$. On the other hand,  (the first) Green formula
\eqref{eq.Green.parab} and the normality property \eqref{eq.left} of the fundamental solution $\Phi$ 
imply (the second) Green formula: 
\begin{equation}\label{eq.Green.parab.2}
u^{(i)}_j (x,t) = - \int_{\partial G_3 }
 {\mathcal G}_{ \mathcal L} (\Phi (x,t,y,\tau), u^{(i)}_j (y,\tau)) \mbox{ for all } (x,t) \in G_3.
\end{equation}
Note that there is no need to assume that $G_3$ is a cylinder domain because this Green formula 
is a corollary of the \textit{local} reproducing property of the fundamental solution. 
Now, passing to the limit with respect to $i\to +\infty$ in \eqref{eq.Green.parab.2} we obtain 
\begin{equation}\label{eq.Green.parab.3}
U_j (x,t) = - \int_{\partial G_3 }
 {\mathcal G}_{ \mathcal L} (\Phi (x,t,y,\tau), U_j (y,\tau)) \mbox{ for all } (x,t) \in G_3 \cap G_1.
\end{equation}
However, since $\Phi$ is a fundamental solution to ${\mathcal L}$ then the right-hand side of formula 
\eqref{eq.Green.parab.3} belongs to $S_{\mathcal L} (G_3)$. Therefore 
the vector function $U_j $ extends as a solution $V_j$ to equation \eqref{eq.heat} from $G_1\cap G_3 $ 
to $G_3$, i.e.  to a neighbourhood of the point $(y_0,t_0)$. In particular, assumption 
$\mathrm{(UCP)}$ for the operator ${\mathcal L}$ 
with respect to the space variables implies that this extension is unique on $G_3\setminus (y_0,t_0)$.
This means the vector function $V_j \in S_{\mathcal L} (G_3)$
coincide with the $j$-th vector column $U_j$ of the fundamental matrix $\Phi (x,t,y_0,t_0)$ in 
$G_3 \setminus (y_0,t_0))$.
Thus, we obtain a contradiction because for the matrix $V (x,t)$ with columns $V_j$, $1\leq j \leq k$
 we have ${\mathcal L}V  =0$ in $G_3$  
  but ${\mathcal L} \Phi  (x,y_0,t,t_0)$ coincides with the $\delta$-functional 
concentrated at the point $(y_0,t_0)$. 

Finally, if \eqref{eq.Kt} is not fulfilled we consider the domain
$$
\tilde G_1 = \Big(\cup_{t\in (T_1 (G_1), T_2 (G_1))} 
\big( G_1(t) \cup K(t)\big)\Big) \setminus K (t_0) \subset G_2.
$$
containing the domain $G_1$. By the discussion above, the space 
$S_{\mathcal L} (G_2)$ is not everywhere dense in  $S_{\mathcal L} (\tilde G_1)$  and 
 hence there is functional $F_0\in (S_{\mathcal L} (\tilde G_1))^*$ that annihilates 
the space $S_{\mathcal L} (G_2)$ but does not annihilate $S_{\mathcal L} (\tilde G_1)$. 
On the other hand,  $S_{\mathcal L} (\tilde G_1) \subset 
S_{\mathcal L} (G_1) $ and by Hahn-Banach there is an extension $F_1\in (S_{\mathcal L} (G_1))^*$ 
of the functional $F_0$. Then, by the construction, the functional $F_1$ annihilates 
the space $S_{\mathcal L} (G_2)$ but does not annihilate the space 
$ S_{\mathcal L} (\tilde G_1) \subset S_{\mathcal L} (G_1) $, i.e. 
$S_{\mathcal L} (G_2)$ can not be everywhere dense in  $S_{\mathcal L} (G_1)$. 
\end{proof}

\section{The approximation in the mean}
\label{s.2}

In this section we discuss an approximation theorem for solutions to the operator 
$\mathcal L$ belonging to the Lebesgue spaces. Actually, it is quite similar to the 
approximation theorems for elliptic operators mentioned in the introduction and 
the approximation theorem with uniform convergence on compact subsets for parabolic systems 
proved in the previous section. Also, they are known for the heat equation or for the 
parabolic Lam\'e system in cylinder domains with rather regular lateral surfaces, 
see, for instance,  \cite{ShHeat} or \cite{VKShLame}. 

Investigating spaces of solutions to $2m$-parabolic equation, we need the 
anisotro\-pic Sobolev 
spaces $H^{2ms,s} (G)$, $s \in  {\mathbb Z}_+$, in a domain $G\subset {\mathbb R}^{n} 
\times \mathcal{I}$ with the standard inner product,
$$
(u,v)_{H^{2ms,s} (G)} = \sum_{|\alpha|+2mj\leq 2ms} 
(\partial^\alpha_x \partial^j_t  u, \partial^\alpha_x \partial^j_t v)_{L^2 (G) }.
$$ 
Also, for $\gamma \in {\mathbb Z}_+$, we denote by $H^{\gamma,2sm,s} (G)$ 
the set of all functions  $u \in H^{2sm,s} (G)$ such that 
 $\partial ^\beta_x u \in H^{2ms,s} (G)$ for all  $|\beta|\leq \gamma$.  
As before, it is convenient to denote by 
$\mathbf{H}^{2ms,s}_k (G)$ the space of all the $k$-vector functions with 
the components from ${H}^{2ms,s} (G)$, and similarly for the spaces
$\mathbf{H}^{\gamma, 2ms,s}_k (G)$, etc.

We also will use the so-called Bochner spaces 
of functions depending on $(x,t)$ from the strip  
$\mathbb{R}^n \times  [T_1,T_2]$ with finite numbers $T_1<T_2$. 
Namely, for a Banach space $\mathcal B$ (for example, the space of functions 
on a sub-domain of $\mathbb{R}^n$) and    $p \geq 1$, we denote by  
$L^p (I,{\mathcal B})$ the Banach space of all the measurable mappings 
  $u : [T_1,T_2] \to {\mathcal B}$
with the finite norm  
$$
   \| u \|_{L^p ([T_1,T_2],{\mathcal B})}
 := \| \|  u (\cdot,t) \|_{\mathcal B} \|_{L^p ([T_1,T_2])},
$$
see, for instance, \cite[ch. \S 1.2]{Lion69},  \cite[ch.~III, \S~1]{Tema79}. 

The space $C ([T_1,T_2],{\mathcal B})$ is introduced with the use of the 
same scheme; this is the Banach space of all the continuous mappings
$u : [T_1,T_2] \to {\mathcal B}$ with the finite norm 
$$
   \| u \|_{C ([T_1,T_2],{\mathcal B})}
 := \sup_{t \in [T_1,T_2]} \| u (\cdot,t) \|_{\mathcal B}.
$$

Let ${\mathbf H}^{\gamma,2sm,s} _{k,\mathcal L}(G) =
\mathbf{H}^{\gamma,2sm,s}_k (G) \cap S _{\mathcal L}(G) $, 
$s \in  {\mathbb Z}_+$, $\gamma \in {\mathbb Z}_+$. 
By the discussion in Section \S \ref{s.1}, the space
${\mathbf H}^{2m,1} _{k,\mathcal L}(G)$ is a closed subspace of the Sobolev space 
$\mathbf{H}^{2m,1}_k(G)$. 
Similarly, if  coefficients of the operator $\mathcal L$ are smooth, 
bounded and real analytic with respect to the variables $x$ for each $t\in {\mathcal I}$,  
then ${\mathbf H}^{\gamma,2sm,s} _{k,\mathcal L}(G)$, ${\mathbf C}^{2ms,s} _{k,\mathcal L} 
(\overline{G}) = \mathbf{C}_k^{2ms,s} (\overline{G}) \cap S _{\mathcal L}(G) $, 
${\mathbf C}^{\infty} _{k,\mathcal L} (\overline{G}) =
\mathbf{C}_k^{\infty} (\overline{G}) \cap S _{\mathcal L}(G) $ 
are closed subspaces, consisting of solutions to equation \eqref{eq.heat}, in the spaces 
${\mathbf H}^{\gamma,2sm,s} _{k}(G)$, $\mathbf{C}^{2ms,s}_k  (\overline{G})$ and  
$\mathbf{C}^{\infty}_k  (\overline{G})$, respectively. 

Also, we need the space  $S _{\mathcal L}(\overline{G_2})$, 
defined as follows: 
$$
\cup_{G' \supset 
\overline{G} } S _{\mathcal L}(G'),
$$
where the union is with respect to all the domains $G' \subset {\mathbb R}^{n} \times 
(t_1, t_2)$,  containing the closure of the domain  $G$.
It follows from the a priori estimates referred to  in \S \ref{s.1}
that the following (continuous) embeddings 
\begin{equation} \label{eq.emb.H}
S _{\mathcal L}(\overline{G}) \subset 
\mathbf{C}^{2m,1} _{k,\mathcal L} (\overline{G}) \subset
\mathbf{H}^{2m,1} _{k,\mathcal L}(G) 
\end{equation}
are fulfilled. Of course, if we additionally know that the coefficients of the operator 
${\mathcal L}$ are constant or smooth and real analytic with respect to the space variables 
then the operator is hypoelliptic and the following (continuous) embeddings 
\begin{equation} \label{eq.emb.H.infty}
S _{\mathcal L}(\overline{G}) \subset 
\mathbf{C}^{\infty} _{k,\mathcal L} (\overline{G}) \subset
\mathbf{H}^{\gamma,2ms,s} _{k,\mathcal L}(G) 
\end{equation} 
hold true  $\gamma,s\in {\mathbb Z}_+$. 

To prove an approximation theorem for the spaces of the Lebesgue solutions to 
$\mathcal L$,  we need more regularity of $\partial G$:  
\begin{itemize}
\item[(A1)]    
For each $t\in [T_1, T_2]$, the sets  
$G (t) = \{x \in {\mathbb R}^n: (x,t) \in G\}$, 
are  domains in ${\mathbb R}^n$ with $C^{2m}$-boundaries if $n\geq 2$ 
 or the union of a finite numbers of intervals if $n=1$.
\item[(A2)] 
The boundary $\partial G$ of $G$ is the union $G (T_1)\cup  G (T_2) \cup \Gamma $, where 
$$\Gamma =\cup_{t\in (T_1,T_2)} \partial G (t)$$ 
is a $C^{2m,1}$-smooth surface without  
points where the tangential planes are parallel to the coordinate plane $\{t=0\}$, i.e. 
we have 
$$
\sum_{j=1}^n  (\nu_j (x,t))^2 \geq \varepsilon _0 \mbox{ for all } (x,t) \in \Gamma
$$
with a positive number $\varepsilon _0$.
\end{itemize}

Under these assumptions we easily see that functions from the space $H^{2m,1} (G)$ and  
some of their partial derivatives have reasonable traces on $\partial G$. The definition, 
the uniqueness and an existence theorem for traces of functions from anisotropic spaces can 
be found in \cite[\S 10]{BeINi}.  In our particular situation we may 
specify the traces  by hands (cf. \cite[Ch.3, \S 7]{MikhX} for anisotropic spaces in cylinder 
domains). With this purpose, we denote by $Y_{s-1/2} (\Gamma)$ the completion   
of ${C}^{s} 
(\Gamma)$, $s \in \mathbb N$, with respect to the norm 
$$
\|\cdot\|_{Y_{s-1/2}(\Gamma)} =\left( \int_{T_1}^{T_2} 
\|\cdot\|^2_{H^{s-1/2} (\partial G (t))} dt \right)^{1/2};
$$
by construction, these are Hilbert spaces embedded continuously 
into ${L} ^2 (\Gamma)$. 

Of course, if $G = \Omega \times (T_1,T_2)$ is a cylinder domain in ${\mathbb R}^{n+1}$
with the base $\Omega$ being a domain with the boundary of class $C^{s}$, then 
$\Gamma = \partial \Omega \times (T_1, T_2)$ and $ Y_{s-1/2} (\Gamma) $ coincides with the Bochner space 
$L^2 ([T_1, T_2], H^{s-1/2} (\partial \Omega))$. 

\begin{lem} \label{l.trace.c0} Let $G$ be a relatively compact domain 
in ${\mathbb R}^n \times {\mathcal I}$ satisfying $\mathrm{(A1)}$, $\mathrm{(A2)}$. 
Then any $w \in {H}^{2m,1} (G)$  
has well-defined trace in $\partial G$. 
Besides, its derivatives  $\partial^\alpha v$ have traces 
on $\Gamma$ of the class $Y_{2m-|\alpha|-1/2} (\Gamma)$  
for each $\alpha \in {\mathbb Z}^n_+$ with $|\alpha|\leq 2m-1$. 
\end{lem}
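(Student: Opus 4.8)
The plan is to reduce the statement to the classical trace theorem for (isotropic) Sobolev spaces on the spatial slices $G(t)$, together with the one-dimensional trace in the time variable on the caps, and then to reassemble the pieces using that the norm of $Y_{s-1/2}(\Gamma)$ is literally the integral over $t$ of the $H^{s-1/2}(\partial G(t))$-norms. First I would carry out the slicing. Since $w\in H^{2m,1}(G)$ means $\partial^\alpha_x w\in L^2(G)$ for all $|\alpha|\le 2m$ and $\partial_t w\in L^2(G)$, Fubini's theorem shows that for almost every $t\in(T_1,T_2)$ the slice $w(\cdot,t)$ lies in $L^2(G(t))$ together with all its weak spatial derivatives up to order $2m$, the weak derivative of the slice coinciding with the slice of the weak derivative. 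Hence $w(\cdot,t)\in H^{2m}(G(t))$ for a.e. $t$, and collecting the $j=0$ terms of the norm gives
\[
\int_{T_1}^{T_2}\|w(\cdot,t)\|^2_{H^{2m}(G(t))}\,dt\le \|w\|^2_{H^{2m,1}(G)}<\infty.
\]

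Next I would apply the slice-wise trace theorem. By $(A1)$ each $G(t)$ is a bounded domain with $C^{2m}$ boundary, so for every multi-index $\alpha$ with $|\alpha|\le 2m-1$ (whence $2m-|\alpha|-1/2\ge 1/2>0$) the classical trace theorem yields a trace $\partial^\alpha_x w(\cdot,t)|_{\partial G(t)}\in H^{2m-|\alpha|-1/2}(\partial G(t))$ together with the estimate $\|\partial^\alpha_x w(\cdot,t)|_{\partial G(t)}\|_{H^{2m-|\alpha|-1/2}(\partial G(t))}\le C(t)\,\|w(\cdot,t)\|_{H^{2m}(G(t))}$, where $C(t)$ depends only on the $C^{2m}$-geometry of $\partial G(t)$.

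The crucial step, which I expect to be the main obstacle, is to show that the constants $C(t)$ are bounded uniformly in $t$ and that $t\mapsto \partial^\alpha_x w(\cdot,t)|_{\partial G(t)}$ is measurable into the relevant Sobolev space. Here assumption $(A2)$ is essential: because $\Gamma$ is a $C^{2m,1}$ surface with $\sum_{j=1}^n\nu_j^2\ge\varepsilon_0>0$, no tangent plane of $\Gamma$ is parallel to $\{t=0\}$, so locally $\Gamma=\{\rho(x,t)=0\}$ with $\rho\in C^{2m,1}$ and $|\nabla_x\rho|\ge c>0$ uniformly. The implicit function theorem in $x$ then flattens $\partial G(t)$ for all $t$ simultaneously by maps that are $C^{2m}$ in $x$ and Lipschitz in $t$; a finite cover of the compact set $\overline\Gamma$ by such charts furnishes $C^{2m}$-atlases for the whole family $\{\partial G(t)\}$ with norms bounded independently of $t$, giving $C(t)\le C$. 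In these coordinates the trace is realized as restriction to a fixed hyperplane, which also yields the measurability in $t$. Combining this with the slicing estimate and Cauchy--Schwarz in $t$,
\[
\int_{T_1}^{T_2}\|\partial^\alpha_x w(\cdot,t)|_{\partial G(t)}\|_{H^{2m-|\alpha|-1/2}(\partial G(t))}\,dt\le C\,(T_2-T_1)^{1/2}\Big(\int_{T_1}^{T_2}\|w(\cdot,t)\|^2_{H^{2m}(G(t))}\,dt\Big)^{1/2}<\infty,
\]
so the trace lies in $Y_{2m-|\alpha|-1/2}(\Gamma)$.

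Finally, for the traces on the caps $G(T_1),G(T_2)$ and the well-definedness of the order-zero trace on all of $\partial G$, I would argue as follows. Away from $\overline\Gamma$ the faces are pieces of the hyperplanes $\{t=T_i\}$, and there $w\in L^2(G)$ with $\partial_t w\in L^2(G)$ allows the one-dimensional trace $H^1(T_1,T_2)\hookrightarrow C[T_1,T_2]$ to be applied fiberwise in $x$, producing $w(\cdot,T_1),w(\cdot,T_2)\in L^2$ of the interiors of the caps; near the corner $\overline\Gamma\cap\{t=T_i\}$ the lateral and cap constructions are glued by a partition of unity. Well-definedness (independence of the construction) follows by defining the trace operators by continuity: smooth functions up to the boundary are dense in $H^{2m,1}(G)$ for the $C^{2m,1}$-regular domain $G$, on which the classical restriction maps agree with the traces above, and all the trace maps just constructed are continuous, so they extend uniquely from the dense smooth subspace.
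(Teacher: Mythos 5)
Your proposal is correct and follows essentially the same route as the paper: Fubini slicing, the classical trace theorem applied on each slice $G(t)$ with a constant uniform in $t$, integration in $t$ against the definition of the $Y_{2m-|\alpha|-1/2}(\Gamma)$-norm, and density of smooth functions plus completeness to justify well-definedness. Your chart-based justification via $(A2)$ of the uniformity in $t$ of the slice-wise trace constants is in fact more detailed than the paper's, which merely asserts that these constants can be chosen to depend on the $(n-1)$-measure of $\partial G(t)$.
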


\begin{proof} First, we note that the space ${H}^{2m,1} (G)$  is continuously 
embedded into the isotropic Sobolev space ${H}^{1} (G)$ and hence the trace
$w_{|\partial G}$ is well-defined. Actually, it belongs to ${H}^{1/2} (\partial G)$, 
see, for instance, \cite{BeINi}. 

Next, by the structure of the domain and the Fubini theorem, 
we have 
$$
\sum_{|\alpha| \leq 2m}\|\partial ^\alpha_y w\|^2_{L^2 (G)} = 
\sum_{|\alpha| \leq 2m} 
\int_{T_1}^{T_2} \int_{G (\tau)} |\partial ^\alpha_y v(w,\tau)|^2 dy  d\tau = 
$$
$$ 
\int_{T_1}^{T_2} \| w(\cdot,\tau)\|^2 _{H^{2m} (G(\tau))}  d\tau 
$$
for any $w\in {C}^{2m} (\overline G)$. 

Now the standard Trace  Theorem for the Sobolev spaces applied for the spaces 
$H^{2m} (G(t))$ yields
$$
\|\partial ^\alpha_y h \|_{H^{2m-|\alpha|-1/2} 
(\partial G (t))}\leq C_{G(t),\alpha} \|\partial ^\alpha_y h \|_{H^{2m-|\alpha|} 
(G(t))}
$$
for all $|\alpha|\leq 2m-1$, for all $h \in H^{2m } (G (t))$ and each $t \in (T_1,T_2)$. 
Hence 
\begin{equation} \label{eq.1}
\int_{T_1}^{T_2} \!\!\! \sum_{|\alpha| 
\leq 2m-1} \!\!\! \|\partial ^\alpha_y w (\cdot ,\tau)\|^{2}_{H^{2m-|\alpha|-1/2} 
(\partial G (t))} d\tau \leq C \! \int_{T_1}^{T_2} \! \| w(\cdot,\tau)\|^2 _{H^{2m} (G(\tau))}  
d\tau 
\end{equation}
for any $w\in {C}^{2m} (\overline G)$; 
here the positive constant 
$$
C=C(G)=\sup_{|\alpha|\leq 2m-1}\sup_{t\in [T_1,T_2]} 
C_{G (t),\alpha}
$$ 
is finite because the related constants $C_{G(t),\alpha} $ may be chosen to depend on 
the $(n-1)$-measure of the domains $\partial G (t)$.

According to 
\cite[\S 14]{BeINi}, any 
function $w\in {H}^{2m,1} (G)$ may be approximated 
by functions from ${C}_{ {\rm comp}}^\infty  ({\mathbb R}^{n+1})$ in the topology 
of the space ${H}^{2m,1} (G)$. Pick a sequence $w^{(s)}\subset {C}
_{{\rm comp}}^\infty  ({\mathbb R}^{n+1})$ approximating $w$ in ${H}^{2m,1} (G)$. 
Then \eqref{eq.1} yields that the sequence $\{ \partial ^\alpha _y w^{(s)} \}$ is 
a Cauchy sequence in ${Y}_{2m-|\alpha|-1/2} (\Gamma)$ for each $\alpha$ with 
$|\alpha|\leq 2m-1$. As ${Y}_{2m-|\alpha|-1/2} (\Gamma)$ is complete we conclude 
that for each $\alpha$ with $|\alpha|\leq 2m-1$ it converges to an element $w_\alpha$, i.e. 
there is a well-defined trace $\partial ^
\alpha _y w_{|\Gamma} = w_\alpha \in {Y}_{2m-|\alpha|-1/2} (\Gamma)$ of the function 
$\partial ^\alpha _y w$ on the surface $\Gamma$. 
\end{proof}

Now we formulate the main result of this section.

\begin{thm}
\label{t.dense.base.suff}
Let ${\mathcal L}$ satisfy assumptions $(\alpha_1)$, $(\alpha_2)$, $(\alpha_3)$, 
$(\alpha_4)$ and $G_1 \subset G_2 $   be domains 
in the strip ${\mathbb R}^n \times {\mathcal I}$ such that 
$G_2 \ne {\mathbb R}^n \times {\mathcal I}$. 
If   $\mathrm{(UCP)}$ is fulfilled 
for  ${\mathcal L}^*$, domain $G_2$ 
satisfies assumption $\mathrm{(A)}$, bounded domain $G_1$  satisfies $\mathrm{(A1)}$, 
$\mathrm{(A2)}$,  and 
the pair $G_1$, $G_2$ satisfies assumption $\mathrm{(B)}$,
then 
$S_{\mathcal L}(\overline G_2)$ is everywhere dense in the space 
$\mathbf{L}^{2} _{k,\mathcal L}(G_1)$.
\end{thm}

\begin{proof} The proof is similar to the proof of Theorem \ref{t.dense.uniform.suff}. However 
we need a more regular boundary for the domain $G_1$. 

Clearly, the set 
 $S_{\mathcal L}(\overline G_2)$ is everywhere dense in
 $\mathbf{L}^{2} _{k,\mathcal L}(G_1)$ if and only if the 
following relations 
\begin{equation} \label{eq.ort}
(u,w)_{\mathbf{L}^2_k (G_1)} = 0 \mbox{ for all } w\in 
S_{\mathcal L}(\overline {G_2})
\end{equation}
means precisely for  the  vector $u\in \mathbf{L}^{2} _{k,\mathcal L}(G_1)$ 
that $u\equiv 0$ in $G_1$. 

As in the proof of Theorem \ref{t.dense.uniform.suff}, we use the fact that the operator ${\mathcal L}$ admits the bilateral 
fundamental solution possessing the normality property \eqref{eq.left}. 

Let for the vector $u\in \mathbf{L}^{2} _{k,\mathcal L}(G_1)$ relation 
\eqref{eq.ort} be fulfilled. Consider an auxiliary vector function
\begin{equation}
\label{eq.v}
v (y,\tau) = \int_{G_1} \Phi^*(x,y,t,\tau) u (x,t)   dx \, dt.  
\end{equation}
Again, the function $v$ is well defined for $(y,\tau) \not \in G_1$, but, arguing as in the proof 
of Theorem 
\ref{t.dense.uniform.suff}, we easily see that it extends as a distribution to the strip 
${\mathbb R}^n \times {\mathcal I}$. 
According to \eqref{eq.right} we have 
$\Phi(x,y,t,\tau) \in S_{\mathcal L}(\overline G_2)$ 
with respect to variables $(x,t)$ for each fixed 
pair  $(y,\tau) \not \in \overline G_2$ and, then  
relation  \eqref{eq.ort}  implies  
\begin{equation} \label{eq.v1}
v (y,\tau) = 0 \mbox{ in } \Big({\mathbb R}^{n} \times \mathcal{I}\Big) \setminus \overline G_2. 
\end{equation}
On the other hand, \eqref{eq.left} yields  
\begin{equation} \label{eq.v0}
{\mathcal L}^*v    = \chi_{G_1} u \mbox{ in } {\mathbb R}^{n} \times \mathcal{I},
\end{equation}
where $\chi_{ G_1}$ is the characteristic function 
of the domain  $G_1$. Obviously, 
$$
{\mathcal L}^*_{y,\tau} v (y,\tau)=0 \mbox{ in  } \Big({\mathbb R}^{n} \times \mathcal{I}\Big) 
\setminus \overline G_1,
$$
and then, by the discussed above  properties of the fundamental solution, the vector function 
 $v$ is $C^{2m,1}_{k}$-vector function in 
$\Big({\mathbb R}^{n} \times \mathcal{I}\Big)  \setminus \overline G_1 $. 

Since both $G_1$ and $G_2$ satisfy assumption $\mathrm{(A)}$ 
then, in the same way as in the proof of Theorem \ref{t.dense.uniform.suff}, assumption 
$\mathrm{(B)}$, formula \eqref{eq.DeM}  
and the Unique Continuation Property $\mathrm{(UCP)}$ 
for the operator ${\mathcal L}^*$ imply that 
\begin{equation} \label{eq.v2}
v (y,\tau) = 0   \mbox{ in } \Big({\mathbb R}^{n} \times \mathcal{I}\Big)  \setminus \overline G_1 .
\end{equation}
 Then, in addition,  \eqref{eq.v0}, \eqref{eq.v2} mean that the vector  
 $v$ is a solution to the Cauchy problem 
$$
\left\{
\begin{array}{lll}
{\mathcal L}^*  v = \chi_{G_1} u \mbox{ in } {\mathbb R}^{n}  \times 
(T_1-\delta,T_2+\delta),\\
 v(y,T_2+\delta) = 0 \mbox{ on } {\mathbb R}^{n} \\
\end{array}
\right.
$$
with a sufficiently small $\delta>0$ such that $[T_1-\delta,T_2+\delta] \subset \mathcal{I}$. 
Taking in account the natural relation between parabolic and backward-parabolic operators and 
using arguments from  \cite[ch. 2, \S 5 theorem 3]{Kry08}, we may conclude that  
$v\in \mathbf{H}^{2m,1} _k ({\mathbb R}^{n} \times (T_1-\delta,T_2+\delta))$ and the 
solution is unique in this class. The regularity of this unique solution to the Cauchy 
problem can be expressed in term of the Bochner classes, too. Namely, 
 $v\in C ( [T_1-\delta,T_2+\delta], \mathbf{H}^{m}_k({\mathbb R}^{n}) )
\cap L^2 ([T_1-\delta,T_2+\delta], \mathbf{H}^{2m}_k ({\mathbb R}^{n}) )$, 
see, for instance, \cite{Lion69}, \cite[ch. 3, \S 1]{Tema79}, where similar linear problems 
for parabolic equations were considered. In particular, the vector function 
 $v$ belongs to the space 
\begin{equation} \label{eq.space.1}
 C ( [T_1-\delta,T_2+\delta], \mathbf{H}^{m}_k({\mathbb R}^{n}) )
\cap \textbf{H}^{2m,1}_k ({\mathbb R}^{n}  \times (T_1-\delta,T_2+\delta))  .
\end{equation}

\begin{lem} \label{l.dense.c0} Any vector $v$ of type \eqref{eq.v}, satisfying 
\eqref{eq.v2}, can be approximated by the vectors from 
$\mathbf{C}^\infty_{k,{\rm comp}}  (G_1)$ in the topology of the Hilbert space 
$\mathbf{H}^{2m,1}_k (G_1)$. 
\end{lem}

\begin{proof} The approximability of Sobolev functions with compact support in a 
domain is closely related to the notion of trace.  
However, \eqref{eq.space.1} implies that  traces $v_{|\overline{G(T_j)}}$ of $v$ on the 
closures of  $G(T_j)$ belong to $\mathbf{H}^{m}_k(G(T_j)) $, $1\leq j \leq 2$,  and they are 
equal to zero because of \eqref{eq.v2}. Moreover, according to Lemma \ref{l.trace.c0}, the 
traces of $\partial ^\alpha v_{|\Gamma} \in {\mathbf Y}_k^{2m-|\alpha|-1/2} (\Gamma)$ equal 
to zero on $\Gamma$ for all $|\alpha|\leq 2m-1$.

Now the statement of the lemma 
easily follows with the use of the standard regularisation, see, for instance,  
\cite[ch. 3, \S 5, \S 7,]{MikhX} for the isotropic Sobolev spaces, cf.  
see \cite{ShHeat} for cylinder domains. 

Indeed, denote by $h^{(n)}_\delta (x)$ the standard compactly supported function with 
the support in the  ball  $B(x,\delta) \subset {\mathbb R}^n$ with the centre  
at the point  $x$ and of the radius   $\delta>0$:
$$
h^{(n)}_\delta (x) = \left\{
\begin{array}{lll}
0, & \mathrm{if} & |x|\geq \delta, \\
c(\delta) \exp{(1/(|x|^2-\delta^2))}, & \mathrm{if} & |x|<\delta, \\
\end{array}
\right.
$$
where $c(\delta) $ is  the constant providing equality 
$$
\int_{{\mathbb R}^n} h_{\delta}^{(n)}(x) \, dx =1.
$$
Then, as it is well known, for any function $w \in L^1 (K)$ on a measurable compact 
$K$ in ${\mathbb R}^n$, the standard regularisation  
$$
(R ^{(n)}_\delta w) (x) = \int_{{\mathbb R}^{n}} h^{(n)}_\delta (x-y,t-\tau) w (y,\tau) \, dy d\tau 
$$
belongs to the space  $C^\infty_0  ({\mathbb R}^{n})$ for any positive number 
$\delta$ and the support of $R ^{(n)}_\delta w $ lies in a $\delta$-neighbourhood of $K$.    

According to assumption 
$\mathrm{(A2)}$ there is a real valued function 
$\rho (x,t) $ of the class $C^{2m,1}$ in a neighbourhood $U$ of the surface $\Gamma$ 
and such that 
$$
\partial G_1 (t) = \{ (x,t) \in {\mathbb R}^n \times [T_1, T_2]: 
\, \rho (x,t)=0\}, \,\, \nabla \rho \ne 0 \mbox{ in } U 
\} .
$$
Hence, for all sufficiently small numbers $\varepsilon>0$ the sets 
$$
G_1^{\varepsilon} = 
\{ (x,t) \in {\mathbb R}^n: \, \rho (x,t)<-\varepsilon\}
$$
are domains with boundaries of class $C^{2m,1}$ and 
$$
G_1 ^{\varepsilon}\Subset G_1 ^{\varepsilon'}\Subset 
G_1, 
$$
if $0<\varepsilon' <\varepsilon$, and 
$$
\lim_{\varepsilon \to +0}\mbox{mes} (G_1 \setminus 
\overline{G_1  ^{\varepsilon }}) = 0.
$$
Moreover the sets $G_1 (t)$ are domains with boundaries of class $C^{2m}$ and  for the 
Lebesgue measure of the domain  $G_1 (t) \setminus \overline{G_1 ^{\varepsilon} (t) }$ 
we have  
$$
G_1 ^{\varepsilon} (t) \Subset G_1 ^{\varepsilon'} (t)\Subset 
G_1 (t), \, t \in [T_1,T_2],  
$$
$$
\lim_{\varepsilon \to +0}\mbox{mes} (G_1 (t)\setminus 
\overline{G_1  ^{\varepsilon} (t) }) = 0,
$$
uniformly with respect to $t\in [T_1,T_2]$. According to \cite[ch. 3, \S 5, lemma 1]{MikhX}, 
there is a positive constant  $C _1 (G_1)$, depending on the square of the surfaces 
$G_1 (T_j) $, $1\leq j\leq 2$, only, and such that 
\begin{equation} \label{eq.est.12}
C_1 (G_1)\, \varepsilon ^{-2}
 \Big(
\int_{T_1}^{T_1+\varepsilon}\| \tilde v \|^2_{L^2 (G (t) )} dt +
\int_{T_2-\varepsilon}^{T_2}\| \tilde v \|^2_{L^2 (G (t) )} dt 
\Big)\leq
\end{equation}
\begin{equation*}  
\int_{T_1}^{T_1+\varepsilon}(\| \tilde v \|^2_{L^2 (G (t) )} +
\| \partial_t \tilde v \|^2_{L^2 (G (t) )} )dt +
\int_{T_2-\varepsilon}^{T_2}(\| \tilde v \|^2_{L^2 (G (t) )} +
\| \partial_t \tilde v \|^2_{L^2 (G (t) )}) dt 
\end{equation*}
for any function $\tilde v \in H^{1} (G_1)$ with zero traces $\tilde v_{|G_1 (T_j)}$
on $G_1 (T_j)$, $1\leq j\leq 2$. 

Similarly, under assumptions $\mathrm{(A1)}$, ${(A2)}$, if  $\partial G(t) \in C^{2m}$ then 
there is a constant  $C _0 (\Gamma)$, depending on the square of the surface 
$ \Gamma$, only, and such that  
\begin{equation} \label{eq.est.11}
\Big(\int_{T_1}^{T_2}\|\partial ^\alpha \tilde v \|^2_{L^2 (G (t) \setminus \overline {G^
\varepsilon}(t))} dt \Big)^{1/2}\leq C_0 (\Gamma)\, \varepsilon  ^{2m-|\alpha|} 
\Big( \int_{T_1}^{T_2} \|\tilde v \|^2_{H^{2m}(G (t) \setminus \overline {G^\varepsilon}(t))} 
dt \Big)^{1/2}
\end{equation}
for any function $\tilde v \in H^{2m,1} (G_1)$ with zero traces 
$\partial^{\alpha}\tilde v_{|\Gamma}$ on $\Gamma$ corresponding to $|\alpha|\leq 2m-1$.

Set 
$$
R_\varepsilon (x,t) = \int_{T_1+\varepsilon/5}^{T_2-\varepsilon/5} 
 h^{(1)}_{\varepsilon/7} (t-\tau) \, \int_{G_1^{\varepsilon/5} (\tau)}  h^{(n)}_{\varepsilon/7} (x-y) \, dy \, d\tau.
$$
Using assumptions 
on $G_1$ we may choose some constants $c_{\alpha}$, $c_{i} $ independent on  $x$ and $t$ 
such that 
\begin{equation} \label{eq.Bochner.1}
0\leq R_\varepsilon  (x,t)\leq 1,
\end{equation}
\begin{equation} \label{eq.Bochner.2}
|\partial^\alpha_x R_\varepsilon (x,t)| \leq c_{\alpha} \, \varepsilon ^{-|\alpha|}, 
\,\, 
|\partial^i_t  R_\varepsilon (x,t)| \leq c_{i} \, \varepsilon ^{-i}, 
\end{equation}
for all $x \in {\mathbb R}^n$, $t \in \mathbb R$, 
$\alpha \in {\mathbb Z}^n_+$, $i \in {\mathbb Z}_+$, 
cf., for instance, \cite[ch. 3, \S 5]{MikhX}. 

Fix a sequence $\{ v_j\} \subset \mathbf{C}^\infty _{0,k} 
({\mathbb R}^{n+1})$, converging to 
$v$ in the space ${\mathbf H}_k^{2m,1} ({\mathbb R}^{n+1} )$ (as we mentioned above, 
the existence of such a sequence follows from \cite[\S 14]{BeINi}). 
Then the functional sequence 
$$
 \{  v_{j,\varepsilon} (x,t)=  R_\varepsilon (x,t)v_j (x,t)\}
$$
lies to ${\mathbf C}^\infty _{k,0} (G_1)$. 

By the triangle inequality, 
\begin{equation} \label{eq.vk.1}
\|v - v_{j,\varepsilon} \|_{{\mathbf H}_k^{2m,1} (G_1)} \leq 
\|v - v_{j} \|_{{\mathbf H}_k^{2m,1} (G_1)} + 
\|v_j- v_{j,\varepsilon} \|_{{\mathbf H}_k^{2m,1} (G_1)} .
\end{equation}
As 
\begin{equation} \label{eq.lim.1}
\lim_{j\to + \infty}\|v - v_{j} \|_{{\mathbf H}_k^{2m,1} (G_1)} =0,
\end{equation}
then we need to estimate the second summand in the right hand side of formula 
\eqref{eq.vk.1}, only. However,  
$$
v _j (x,t)- v_{j,\varepsilon} (x,t) = 
(1-R_\varepsilon (x,t)) v_j (x,t)
$$
and, in particular,  
\begin{equation} \label{eq.eps.0}
v _j (x,t)- v_{j,\varepsilon} (x,t) = 0 \mbox{ for all } (x,t) \in 
\cup_{t=T_1+\varepsilon}^{T_2-\varepsilon}G_1 ^{\varepsilon} (t) .
\end{equation}
Hence, 
\begin{equation} \label{eq.eps}
2^{-1}\|v _j - v_{j,\varepsilon}\|^2_{\mathbf{H}^{2m,1}_k (G_1)} \leq  
\|(1-R_\varepsilon)  \partial_t v_j \|^2
_{\mathbf{L}^{2}_k (G_1)} + 
\end{equation}
$$
\Big\|  (\partial_t R_\varepsilon) 
v_j \Big\|^2 _{\mathbf{L}_k^{2} (\cup_{t\in [T_1,T_1+\varepsilon] \cup [T_2-\varepsilon ,T_2]}
G_1 ^{\varepsilon} (t) )} +
$$
$$
\sum_{ |\alpha| \leq 2m}
\| \partial^{\alpha} ((1-R_\varepsilon)   v_j) \|^2
_{\mathbf{L}^{2}_k (\cup_{t\in [T_1,T_2]} G_1 (t)  \setminus G_1^{\varepsilon} (t))} . 
$$
Then \eqref{eq.est.12}, 
\eqref{eq.est.11},  \eqref{eq.Bochner.1}, \eqref{eq.Bochner.2}, \eqref{eq.eps.0} 
and the Fubini theorem imply that  
\begin{equation} \label{eq.eps.1}
\sum_{ |\alpha| \leq 2m}
\| \partial^{\alpha} (1-R_\varepsilon   v_j) \|^2
_{\mathbf{L}^{2}_k (G_1)}  
\leq 
\end{equation}
$$
\sum_{|\alpha| \leq 2m } 
\|(1-R_\varepsilon)(  \partial^{\alpha} v_j \|^2
_{\mathbf{L}^{2}_k (\cup_{t\in [T_1,T_2]} G_1 (t)  \setminus G_1^{\varepsilon} (t))}  +
$$
$$
\sum_{|\beta+\gamma| \leq 2m , \beta \ne 0} 
\|(\partial^{\beta}R_\varepsilon)(  \partial^{\gamma} v_j \|^2
_{\mathbf{L}^{2}_k (\cup_{t\in [T_1,T_2]} G_1 (t)  \setminus G_1^{\varepsilon} (t))}  \leq 
$$
$$
\sum_{|\alpha| \leq 2m } 
\|(1-R_\varepsilon)(  \partial^{\alpha} v_j \|^2
_{\mathbf{L}^{2}_k (\cup_{t\in [T_1,T_2]} G_1 (t)  \setminus G_1^{\varepsilon} (t))}  +
$$
$$
\sum_{|\beta+\gamma| \leq 2m , \beta \ne 0} 
\varepsilon^{2m-|\gamma+\beta|}\|   v_j \|^2
_{\mathbf{H}^{2m,0}_k (\cup_{t\in [T_1,T_2]} G_1 (t)  
\setminus G_1^{\varepsilon} (t))}  \leq 
$$
$$
\| v\|_{\mathbf{H}^{2m,1}_k (\cup_{t\in [T_1,T_2]}  
G_1 (t) \setminus \overline{G_1^{\varepsilon} (t) })}
$$
with constants $C,\tilde C$, independent on $v$ and $\varepsilon$. 

The boundaries of the domains $\cup_{t \in [T_1,T_1+\varepsilon] } G_1 (t)$ and 
$\cup_{t \in [T_2-\varepsilon,T_2] } G_1 (t)$ are not smooth, but combining results 
\cite[ch. 3, \S 5]{MikhX} related to a function $v$, having the trace vanishing on surfaces  
$G_1(T_1)$ and $G_2 (T_2)$, with bounds  \eqref{eq.Bochner.1}, \eqref{eq.Bochner.2}, 
we see that 
\begin{equation} \label{eq.eps.3}
\| (\partial _t R_\varepsilon) v \|^2
_{\mathbf{L}^{2}_k (\cup_{t\in [T_1,T_1+\varepsilon] \cup [T_2-\varepsilon ,T_2]}
G_1 ^{\varepsilon} (t) )}
\leq 
\end{equation}
$$
\varepsilon^{-1} \cdot \varepsilon 
C \, \Big( \| _t v\|^2
_{\mathbf{L}^{2}_k (\cup_{t\in [T_1,T_1+\varepsilon] \cup [T_2-\varepsilon ,T_2]} 
G_1 ^{\varepsilon} (t) )} + 
\| \partial _t v\|^2
_{\mathbf{L}^{2}_k (\cup_{t\in [T_1,T_1+\varepsilon] \cup [T_2-\varepsilon ,T_2]} 
G_1 ^{\varepsilon} (t) )} \Big)\leq 
$$
$$
\| v\|_{\mathbf{H}^{2m,1}_k (\cup_{t\in [T_1,T_1+\varepsilon] \cup [T_2-\varepsilon,T_1]}  
G_1 (t) \setminus \overline{G_1^{\varepsilon} (t) })}
$$
with a constant $C$, independent on $v$ and  $\varepsilon$.

Besides, according to  \eqref{eq.Bochner.1}, \eqref{eq.Bochner.2}, 
\begin{equation} \label{eq.eps.4}
\|(1-R_\varepsilon)\partial^i_t v \|^2
_{\mathbf{L}_k^{2} (G_1)}
\leq C \| v\|_{\mathbf{H}^{2m,1}_k (\cup_{t\in [T_1,T_2]}  
G_1 (t) \setminus \overline{G_1^{\varepsilon} (t) })}
\end{equation}
with a constant $C$, independent on $v$ and  $\varepsilon$.

Using the continuity of the Lebesgue integral with respect 
to the measure of the integration set, we conclude that 
\begin{equation} \label{eq.lim.2}
\lim_{\varepsilon \to + 0}
\| v\|_{\mathbf{H}^{2m,1}_k (\cup_{t\in [T_1,T_1+\varepsilon] \cup [T_2-\varepsilon,T_1]}  
G_1 (t) )} =0,
\end{equation}
\begin{equation} \label{eq.lim.2A}
\lim_{\varepsilon \to + 0}\| v\|_{\mathbf{H}^{2m,1}_k (\cup_{t\in [T_1,T_2]}  
G_1 (t) \setminus \overline{G_1^{\varepsilon} (t) })} =0.
\end{equation}

Finally, combining estimates \eqref{eq.eps}--\eqref{eq.eps.4} and taking in account 
\eqref{eq.lim.2}, \eqref{eq.lim.2A}, 
we conclude that the statement of the lemma holds true. 
\end{proof}

Next, using lemma \ref{l.dense.c0} and fixing a sequence 
$\{ v_k \} \subset \mathbf{C}^\infty _{k, {\rm comp}} 
(G_1)$ converging to the vector 
$v$ in $\mathbf{H}^{2m,1}_k (G_1)$, we see that  
$$
\|u\|^2_{\mathbf{L}^2_k (G_1)} = 
(u , {\mathcal L}^* v )_{\mathbf{L}^2_k (G_1)} = 
\lim_{i \to + \infty }(u, {\mathcal L}^* v_ i )_{\mathbf{L}^2 _k(G_1)}   
= 0,
$$
because ${\mathcal L} u =0 $ in $G_1$ in the sense of distributions. 
Thus,  $u \equiv 0$ in $G_1$, i.e. the sufficiency is proved.
\end{proof}

Predictably,  statement similar to Theorem \ref{t.dense.uniform.neces} also holds true. 

\begin{thm}
\label{t.dense.base.neces}
Let ${\mathcal L}$ satisfy assumptions $(\alpha_1)$, $(\alpha_2)$, $(\alpha_3)$, 
$(\alpha_4)$ and $G_1 \subset G_2 $   be domains 
in the strip ${\mathbb R}^n \times {\mathcal I}$ 
satisfying assumption $\mathrm{(A)}$. If
$\mathrm{(UCP)}$ is fulfilled 
for  ${\mathcal L}$ and for the 
coefficients $L_\alpha $, $0<|\alpha|\leq 2m$, the partial derivatives $\partial _t L_{\alpha}$ 
are continuous on ${\mathbb R}^n \times {\mathcal I}$ then assumption $\mathrm{(B)}$ is neccessary for 
$S_{\mathcal L}(\overline G_2)$ to be everywhere dense in the space $\mathbf{L}^{2} 
_{k,\mathcal L}(G_1)$.
\end{thm}

\begin{proof}[Proof of Threorem \ref{t.dense.base.neces}] 
By the interior a priori estimates for parabolic systems, the space 
${\mathbf L}^2_{k,\mathcal L}  (G_1) $ is imbedded continouosly 
to $S_{\mathcal L}  (G_1) $. Then we    
actually we may use the same arguments as in the proof of  
Theorem \ref{t.dense.uniform.neces}.
\end{proof}

Next, we obtain the following useful statement.

\begin{cor} \label{c.dense.base0} 
Let the coefficients of the operator $\mathcal L$ be smooth, bounded and real analytic with 
respect to the variables $x$ for each $t\in \mathcal{I}$. Let also $s,\gamma\in 
{\mathbb Z}_+$,  $G_1 \subset G_2 $ be domains in ${\mathbb R}^{n}\times \mathcal{I}$ such that 
$G_2\ne {\mathbb R}^n \times {\mathcal I} $, domain $G_2$ satisfy $\mathrm{(A)}$, and  
bounded domain $G_1$ satisfy $\mathrm{(A1)}$, $\mathrm{(A2)}$. If $\mathrm{(B)}$ holds true  
then the 
space ${\mathbf C}^\infty_{k,\mathcal L}(\overline G_2)$ is everywhere dense in  
${\mathbf L}^{2} _{k,\mathcal L}(G_1)$; in particular, ${\mathbf H}^{\gamma,2s,s} 
_{k,\mathcal L}(G_2)$ is everywhere dense in  
${\mathbf L}^{2} _{k,\mathcal L}(G_1)$ if $G_2$ is bounded, too. 
\end{cor}

\begin{proof} Follows immediately from Theorem \ref{t.dense.base.suff}, 
because of embeddings \eqref{eq.emb.H.infty}. 
\end{proof}

Finally, Theorem \ref{t.dense.base.suff} allows to prove the existence 
of a basis with the double orthogonality property in the spaces of solutions 
to the operator $\mathcal L$ that is very useful to investigate the non-standard 
ill-posed Cauchy problem for elliptic and parabolic equations, see, 
\cite[Ch. 12]{Tark36}, \cite{ShTaLMS}, \cite{VKShLame}.

\begin{cor} \label{c.bdo} 
Let ${\mathcal L}$ satisfy assumptions $(\alpha_1)$, $(\alpha_2)$, $(\alpha_3)$, 
$(\alpha_4)$, and let $\mathrm{(UCP)}$ hold for both ${\mathcal L}$ and ${\mathcal L}^*$. 
Let  also $G_1 \subset G_2 $ be bounded domains in ${\mathbb R}^{n}
\times \mathcal{I}$, such that 
 $T_1 (G_1)= T_1 (G_2)$, $T_2 (G_1) = T_2 (G_2)$, $G_2$ satisfy $\mathrm{(A)}$ and 
 $G_1$ satisfy $\mathrm{(A1)}$, $\mathrm{(A2)}$. If $\mathrm{(B)}$ holds true 
then there is an 
orthonormal basis  $\{ b_\nu\}$ is the space $\mathbf{H}^{2m,1}_{k,\mathcal L} (G_2)$ such 
that its restriction $\{ b_{\nu|G_1}\}$ to $G_1$ is an orthonormal basis in the space 
$\mathbf{L}^{2}_{k,\mathcal L} (G_1)$. 
\end{cor}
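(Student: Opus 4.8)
The plan is to realise the asserted double orthogonality through the spectral decomposition of the natural restriction operator. I set $H_1 = \mathbf{H}^{2m,1}_{k,\mathcal L}(G_2)$ and $H_2 = \mathbf{L}^{2}_{k,\mathcal L}(G_1)$; both are Hilbert spaces, since, as recorded in \S\ref{s.2}, $H_1$ is a closed subspace of $\mathbf{H}^{2m,1}_k(G_2)$, while the interior a priori estimates for $\mathcal L$ show that an $\mathbf{L}^2(G_1)$-limit of solutions is again a solution, so $H_2$ is closed in $\mathbf{L}^2_k(G_1)$. First I would introduce the restriction operator
\begin{equation*}
\iota : H_1 \to H_2, \qquad \iota u = u_{|G_1},
\end{equation*}
and reduce the corollary to the construction of a complete orthonormal system of eigenvectors of the positive selfadjoint operator $\iota^*\iota$ on $H_1$.

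Next I would record the elementary properties of $\iota$. It is bounded, because $G_1 \subset G_2$ gives $\|\iota u\|_{\mathbf{L}^2(G_1)} \le \|u\|_{\mathbf{L}^2(G_2)} \le C\,\|u\|_{\mathbf{H}^{2m,1}(G_2)}$. It is injective: if $\iota u =0$ then $u \in S_{\mathcal L}(G_2)$ vanishes on the open subdomain $G_1$, and $(UCP)$ applied slice-wise (together with the equalities $T_1(G_1)=T_1(G_2)$, $T_2(G_1)=T_2(G_2)$ and the connectedness of the domains, which guarantee that $G_1(t)$ meets each component of $G_2(t)$) forces $u\equiv 0$ in $G_2$. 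Finally, $\iota$ has dense range: the embedding \eqref{eq.emb.H} gives $S_{\mathcal L}(\overline{G_2}) \subset H_1$, and Theorem \ref{t.dense.base} asserts that the restrictions to $G_1$ of the elements of $S_{\mathcal L}(\overline{G_2})$ are dense in $\mathbf{L}^{2}_{k,\mathcal L}(G_1)=H_2$; hence the range of $\iota$, which contains these restrictions, is dense in $H_2$.

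The decisive step, and the one I expect to be the main obstacle, is the compactness of $\iota$. Here I would proceed as follows. Given a bounded sequence $\{u_j\}\subset H_1$, the interior a priori estimates for $\mathcal L$ recalled in \S\ref{s.1} upgrade the uniform $\mathbf{H}^{2m,1}(G_2)$-bound to uniform bounds of higher order on every compact subset $K\Subset G_2$; the Rellich--Kondrachov theorem then extracts a subsequence converging in $\mathbf{L}^2(K)$, and a diagonal argument over a compact exhaustion of $G_2$ produces a subsequence converging in $\mathbf{L}^2_{\mathrm{loc}}(G_2)$. The genuinely delicate point is the passage from this local convergence to convergence in $\mathbf{L}^2(G_1)$: one must show that the $\mathbf{L}^2$-mass of the $u_j$ over the part of $G_1$ adjacent to $\partial G_2$ is uniformly small as the exhaustion fills out $G_2$. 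This equi-smallness of the tail is exactly where the regularity assumptions $(A1)$, $(A2)$ on $\partial G_1$ and the matching of the time-endpoints enter, and it deserves a careful separate verification.

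Once $\iota$ is known to be compact, the conclusion is routine. The operator $A=\iota^*\iota$ is compact, selfadjoint and positive on $H_1$, and it is injective because $\iota$ is; hence, by the spectral theorem for compact selfadjoint operators, $H_1$ admits an orthonormal basis $\{b_\nu\}$ of eigenvectors, $A b_\nu = \lambda_\nu b_\nu$ with $\lambda_\nu >0$. For these eigenvectors
\begin{equation*}
(\iota b_\nu, \iota b_\mu)_{\mathbf{L}^2(G_1)} = (A b_\nu, b_\mu)_{H_1} = \lambda_\nu\,\delta_{\nu\mu},
\end{equation*}
so the restrictions $\{b_{\nu|G_1}\}=\{\iota b_\nu\}$ are mutually orthogonal in $H_2$ with $\|b_{\nu|G_1}\|^2_{\mathbf{L}^2(G_1)}=\lambda_\nu$; after dividing by $\sqrt{\lambda_\nu}$ they form an orthonormal system, which is complete in $H_2$ precisely because $\iota$ has dense range. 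This exhibits the orthonormal basis $\{b_\nu\}$ of $\mathbf{H}^{2m,1}_{k,\mathcal L}(G_2)$ whose (normalised) restrictions constitute an orthonormal basis of $\mathbf{L}^{2}_{k,\mathcal L}(G_1)$, i.e. the required basis with the double orthogonality property, and completes the proof.
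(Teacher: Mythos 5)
Your overall strategy---realising the double orthogonality through the spectral decomposition of $\iota^*\iota$ for the restriction operator $\iota:\mathbf{H}^{2m,1}_{k,\mathcal L}(G_2)\to\mathbf{L}^{2}_{k,\mathcal L}(G_1)$, with injectivity supplied by $(UCP)$ and dense range by Theorem \ref{t.dense.base}---is exactly the one the paper uses (the paper calls the operator $R$ and delegates the final spectral step to \cite[example 1.9]{ShTaLMS}). The one place where your argument is genuinely incomplete is the compactness of $\iota$, which you rightly single out as the decisive step and then leave open. Your route via interior a priori estimates, Rellich--Kondrachov on compact subsets and a diagonal argument only yields convergence in $\mathbf{L}^2_{\mathrm{loc}}(G_2)$, and the passage to convergence in $\mathbf{L}^2(G_1)$ requires a uniform smallness estimate for the mass of a bounded sequence near $\partial G_2$. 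Since the hypotheses force $T_1(G_1)=T_1(G_2)$ and $T_2(G_1)=T_2(G_2)$, the domain $G_1$ really does reach $\partial G_2$, so this tail estimate is not a removable technicality in your setup; you flag it as ``deserving a careful separate verification'' but do not supply one, and as written the compactness claim is unproved.

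The difficulty is an artifact of the chosen factorisation and disappears if you change it. The paper's argument is: the anisotropic space $\mathbf{H}^{2m,1}_{k,\mathcal L}(G_2)$ embeds continuously into the isotropic space $\mathbf{H}^{1}_{k}(G_2)$, which restricts continuously to $\mathbf{H}^{1}_{k}(G_1)$; and because $G_1$ is bounded with the boundary regularity guaranteed by $(A1)$, $(A2)$, the Rellich--Kondrachov theorem makes the embedding $\mathbf{H}^{1}_{k}(G_1)\to\mathbf{L}^{2}_{k}(G_1)$ compact. Composing these three maps shows $\iota$ is compact with no interior estimates, no exhaustion of $G_2$ and no boundary-tail analysis; note that $(A1)$, $(A2)$ enter here to legitimise the compact Sobolev embedding on $G_1$, not where you expected them to. If you replace your compactness paragraph by this factorisation, the rest of your argument---including the normalisation of the restrictions by $\lambda_\nu^{-1/2}$, which is the standard reading of the statement---goes through.
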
 

\begin{proof} By the definition, the space $\mathbf{H}^{2m,1} _{k,\mathcal L} (G_2)$  is 
embedded continuously into the space $\mathbf{L}^2 _{k,\mathcal L} (G_1)$.  We denote by $R$ 
the natural embedding operator  
$$
R: \mathbf{H}^{2m,1} _{k,\mathcal L}(G_2)\to 
\mathbf{L}^2 _{k,\mathcal L}(G_1).
$$ 
As the numbers $T_1$ and $T_2$ are the same for the domains $G_1$ and $G_2$, 
the Unique Continuation Property $\mathrm{(UCP)}$ for the operator  $\mathcal L$ 
with respect to the space variables implies that the operator $R$ is injective.
Besides, it follows from Theorem \ref{t.dense.base.suff} that the range of the operator 
$R$ is everywhere dense in the space ${\mathbf L}^2 _{k,\mathcal L}(G_1)$. 

By the definition, the anisotropic Sobolev space   
$\mathbf{H}^{2m,1} _{k,\mathcal L}(G_2)$ is embedded continuously 
to the isotropic Sobolev space  $\mathbf{H}^{1} _{k,\mathcal L}(G_2)$. Besides,  
by Rellich-Kondrashov theorem the embedding  
$\mathbf{H}^{1} _{k}(G_1) \to {\mathbf L }^2_k (G_1)$ is compact. 
Thus, taking in account the continuous embedding 
$\mathbf{H}^{1} _{k}(G_2) \to {\mathbf H }^1_k (G_1)$, we see that 
the natural embedding operator $R$  is compact. 

Finally,   \cite[example 1.9]{ShTaLMS} implies that the complete system of eigen-vectors 
of the  compact self-adjoint operator
 $R^* R: 
\mathbf{H}^{2m,1} _{k,\mathcal L}(G_2) \to \mathbf{H}^{2m,1} 
_{k,\mathcal L}(G_2)$ 
is the basis looked for; here  $R^*$ is the adjoint operator for $R$ 
in the sense of the Hilbert space theory.
\end{proof}

\begin{cor} \label{c.bdo.s} 
Let the coefficients of the operator $\mathcal L$ be smooth, bounded and real analytic with 
respect to the variables $x$ for each $t\in \mathcal{I}$. Let also $s,\gamma\in 
{\mathbb Z}_+$,  $G_1 \subset G_2 $ be bounded domains ${\mathbb R}^{n}
\times \mathcal{I}$, such that  
$T_1 (G_1)= T_1 (G_2)$, $T_2 (G_1) = T_2 (G_2)$, domain $G_2$ satisfy $\mathrm{(A)}$, and 
  domain $G_1$ satisfy $\mathrm{(A1)}$, $\mathrm{(A2)}$.  If $\mathrm{(B)}$ holds true
then there is an 
orthonormal basis $\{ b_\nu\}$ is the space $\mathbf{H}^{\gamma,2ms,s}_{k,\mathcal L} (G_2)$ 
such that its restriction $\{ b_{\nu|G_1}\}$ to $G_1$ is an orthonormal basis in the space  
$\mathbf{L}^{2} _{k,\mathcal L} (G_1)$. 
\end{cor}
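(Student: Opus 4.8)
The plan is to follow verbatim the strategy of Corollary \ref{c.bdo}, merely replacing the first-order anisotropic space $\mathbf{H}^{2m,1}_{k,\mathcal L}(G_2)$ by the higher-order space $\mathbf{H}^{\gamma,2ms,s}_{k,\mathcal L}(G_2)$ and the density input coming from Theorem \ref{t.dense.base} by that of Corollary \ref{c.dense.base0}. First I would introduce the natural restriction operator
$$
R: \mathbf{H}^{\gamma,2ms,s}_{k,\mathcal L}(G_2) \to \mathbf{L}^2_{k,\mathcal L}(G_1),
$$
obtained by composing the continuous embedding of $\mathbf{H}^{\gamma,2ms,s}_{k,\mathcal L}(G_2)$ into $\mathbf{L}^2_k(G_2)$ with the continuous restriction $\mathbf{L}^2_k(G_2) \to \mathbf{L}^2_k(G_1)$, available since $G_1 \subset G_2$. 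The sought basis will then be produced, exactly as in Corollary \ref{c.bdo}, by taking the complete system of eigenvectors of the compact, self-adjoint, non-negative operator $R^*R: \mathbf{H}^{\gamma,2ms,s}_{k,\mathcal L}(G_2) \to \mathbf{H}^{\gamma,2ms,s}_{k,\mathcal L}(G_2)$ and invoking \cite[example 1.9]{ShTaLMS}, which yields the orthonormal basis $\{b_\nu\}$ of $\mathbf{H}^{\gamma,2ms,s}_{k,\mathcal L}(G_2)$ whose restrictions $\{b_{\nu|G_1}\}$, after the normalisation built into that construction, form an orthonormal basis of $\mathbf{L}^2_{k,\mathcal L}(G_1)$. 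To apply this abstract machinery I must verify three properties of $R$: injectivity, density of the range, and compactness.

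Injectivity and density I would handle precisely as in Corollary \ref{c.bdo}. Since $T_1(G_1)=T_1(G_2)$ and $T_2(G_1)=T_2(G_2)$, a solution $u\in \mathbf{H}^{\gamma,2ms,s}_{k,\mathcal L}(G_2)$ with $Ru=0$ vanishes on the nonempty open set $G_1(t)\subset G_2(t)$ for each $t$ in the common interval, so the Unique Continuation Property $(UCP)$ forces $u\equiv 0$ and $R$ is injective. Density of the range is exactly the content of Corollary \ref{c.dense.base0}: under the smoothness and real-analyticity hypotheses on the coefficients of $\mathcal L$ and the assumption that for each $t$ the complement of $G_1(t)$ has no compact non-empty components in $G_2(t)$, the space $\mathbf{H}^{\gamma,2ms,s}_{k,\mathcal L}(G_2)$ is everywhere dense in $\mathbf{L}^2_{k,\mathcal L}(G_1)$.

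The main obstacle will be the compactness of $R$, since this is the one place where the anisotropic nature of $\mathbf{H}^{\gamma,2ms,s}$ is genuinely relevant. The plan is to factor $R$ through an isotropic first-order Sobolev space: I would first check that $\mathbf{H}^{\gamma,2ms,s}_{k,\mathcal L}(G_2)$ embeds continuously into $\mathbf{H}^1_k(G_2)$, noting that the anisotropic norm controls a full first-order spatial derivative as soon as $2ms\ge 1$ (and all the more when $\gamma\ge 1$) and a first-order time derivative as soon as $s\ge 1$. Restricting to $G_1$ gives a continuous map $\mathbf{H}^1_k(G_2)\to\mathbf{H}^1_k(G_1)$, while the Rellich--Kondrashov theorem makes the embedding $\mathbf{H}^1_k(G_1)\to\mathbf{L}^2_k(G_1)$ compact; composing yields compactness of $R$. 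The delicate situation is the one in which the indices do not by themselves supply enough $t$-regularity to reach $\mathbf{H}^1$; there I would exploit that we work on the solution subspace, where the relation $\partial_t u=-Lu$, combined with the hypoellipticity guaranteed by the smooth, real-analytic coefficients and the embeddings \eqref{eq.emb.H.infty}, converts spatial regularity into time regularity, so that the factorisation through $\mathbf{H}^1_k(G_1)$ survives. Once compactness is established, the eigenvector expansion of $R^*R$ completes the argument exactly as in Corollary \ref{c.bdo}.
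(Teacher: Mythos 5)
Your proposal matches the paper's (implicit) argument: Corollary \ref{c.bdo.s} is stated there without proof, being the direct analogue of Corollary \ref{c.bdo} with the density input replaced by Corollary \ref{c.dense.base0} and the same $R^*R$ eigenbasis construction from \cite[example 1.9]{ShTaLMS}, which is exactly what you carry out. Your singling out of the compactness of $R$ for low indices (e.g.\ $s=0$, $\gamma<2m$) as the one genuinely delicate step is apt --- the paper itself is silent on it --- and your suggestion to recover time regularity from $\partial_t u=-Lu$ on the solution subspace is the natural repair, though note that hypoellipticity only gives interior smoothness, so near the common top and bottom boundaries $G(T_1)$, $G(T_2)$ one still needs the stated Sobolev control rather than regularity of solutions alone.
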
 

\bigskip

{\sc Acknowledgments.}
The first author was supported 
by the Krasnoyarsk Mathematical Center and financed by the Ministry of Science and Higher 
Education of the Russian Federation (Agreement No.075-02-2023-936). The 
second author was supported by Supported by the Ministry of Science and Higher Education of 
the Russian Federation, (Agreement 075-10-2021-093, Project MTH-RND-2124).

\end{document}